\theoremstyle{thmstyleone}%
\newtheorem{theorem}{Theorem}
\newtheorem{theo}{Theorem}[section]
\newtheorem{propo}{Proposition}[section]
\newtheorem{lemma}{Lemma}[section]
\newtheorem{exam}{Example}[section]
\newcommand{\kk}{\mathbb{k}}
\newcommand{\Ck}{\mathbb{n_\kk}}
\newcommand{\Do}{\mathrm{D}_\kk}
\newcommand{\Ha}{\mathrm{H}_\kk}
\newcommand{\Card}{\mathrm{Card}}
\newcommand{\NN}{\mathbb{N}}
\newcommand{\RR}{\mathbb{R}}
\DeclareMathOperator{\supp}{supp}
\begin{document}

\title[Stochastic Aggregation-Diffusion Equation: Analysis via Dirichlet Forms]{Stochastic Aggregation-Diffusion Equation: Analysis via Dirichlet Forms}

\author[1]{\fnm{Jaouad} \sur{Bourabiaa}}\email{jaouad.bourabiaa@gmail.com}
\equalcont{These authors contributed equally to this work.}

\author*[1]{\fnm{Youssef} \sur{ Elmadani}\orcidlink{0000-0001-7970-7575}}\email{ elmadanima@gmail.com}

\author[1]{\fnm{Abdelouahab} \sur{Hanine}}\email{abhanine@gmail.com}
\equalcont{These authors contributed equally to this work.}

\affil[1]{\orgdiv{Laboratoire d'Analyse Mathématique et Applications}, \orgname{Mohammed V University}, \orgaddress{\street{Faculty of Sciences}, \city{Rabat}, \postcode{B.P. 1014 Rabat}, \country{Morocco}}}


\abstract{In this article, we study the stochastic aggregation-diffusion equation with a singular drift represented by a monotone radial kernel. We demonstrate the existence and uniqueness of a diffusion process that acts as a weak solution to our equation. This process can be described as a distorted Brownian motion originating from a delocalized point. Utilizing Dirichlet form theory, we prove the existence of a weak solution for a quasi-everywhere point in a state space. However uniqueness is not assured for solutions commencing from points outside polar sets, and explicitly characterizing these sets poses a significant challenge. To address this, we employ the \(\mathrm{H_2}\)-condition introduced by  \cite{albeverio2003strong}. This condition provides a more thorough understanding of the uniqueness issue within the framework of Dirichlet forms. Consequently the \(\mathrm{H_2}\)-condition is pivotal in enhancing the analysis of weak solutions, ensuring a more detailed comprehension of the problem. An explicit expression for the generalized Schrödinger operator associated with certain kernels is also provided.}

\keywords{Stochastic aggregation-diffusion equation, diffusion process, Dirichlet forms, Distorted Brownian motion}


\pacs[MSC Classification]{35R60, 60J60, 60J46, 31C25}

\maketitle

\section{Introduction}\label{sec1}
The Keller-Segel system models the phenomenon of chemotaxis, describing the movement of cells toward chemicals. For a comprehensive overview of chemotaxis, refer to foundational works such as \cite{maini2001applications,book:114270}, and for more detailed discussions and citations can be found in articles such as \cite{Arumugam2020KellerSegelCM, HorstmannFU}. The evolution of cell density $\rho_t$ is determined by the nonlinear aggregation–diffusion equation
\begin{equation}\label{1}
	\begin{cases}
  \partial_t \rho_t = \Delta \rho_t -\chi \nabla \cdot (\rho_t (\nabla \mathbb{k}\ast \rho_t)(x)) , \quad \quad  x \in \mathbb{R}^d, t > 0,\\
		\rho( x, 0)= \rho_0( x) \ge 0 .
	\end{cases}
\end{equation} 
Cells spread throughout the growth zone in search of nutrients, while some begin to produce a chemical signal with a force field kernel $\mathbb{k}$. The sensitivity, denoted by the parameter $\chi > 0$, determines the extent of their response to the chemoattractant and reflects the system's nonlinearity. Accordingly the nature of the interaction kernel can lead to diverse phenomena, ranging from the self-organization of chemotactic movement \cite{Blanchet2007InfiniteTA,Keller1970InitiationOS,book:58110}, biological swarming \cite{Burger2013StationarySA,Topaz2005ANC}, and cancer invasion \cite{Domschke2014MathematicalMO,Chaplainb2008MathematicalMO}.\\

\cite{blanchet2006two} identified a critical mass value \(M_c = 8\pi/\chi\) in two dimensions for the logarithmic kernel. Solutions with mass below this threshold exhibit self-similar behavior and exist globally, while those above it blow up in finite time. For the critical mass, solutions can both exist globally and form a Dirac mass at the center as \(t \rightarrow \infty\) (see \cite{Blanchet2007InfiniteTA}). However changes in the kernel affect how the critical mass is determined, necessitating a separate study for each type of kernel see for instance \cite{Carrillo2018AggregationDiffusionED}.\\

 We can ensure that \(\rho_0\)  remains a probability density function. Thus  equation \eqref{1} corresponds to the Kolmogorov forward (or Fokker-Planck) equation associated with the stochastic differential equation
\begin{equation}\label{2}
	\begin{cases}
		dX_t = \sqrt{2}dW_t -\chi (\nabla\mathbb{k}\ast \rho_t)(X_t) dt,\\
		\rho_t(x) dx =\mathcal{L}(X_t),
	\end{cases}
\end{equation}
where $(W_t)_{t\geq 0}$ is the Brownian motion in $\RR^d$. This derivation aims to clarify how deterministic global behavior develops from stochastic particle interactions, resulting in a simplified macroscopic description. This concept is applied in some mathematical models (see, e.g., \cite{Haskovec2009StochasticPA}). By employing the propagation of chaos approach, we obtain equation \eqref{2} as the limit as $N \to \infty$ of a linear system of stochastic differential equations with mean-field interactions given for $i = 1,\ldots, N$
\begin{equation}\label{3}
	\begin{cases}
		dX_t^{i} = \sqrt{2}dW_t^{i} -\frac{\chi}{N}\sum_{j=1, i \neq j}^N \nabla\mathbb{k}(X_t^{i}-X_t^{j})dt,\\
		X_0=x \sim \otimes_{i=1}^N \rho_0.
	\end{cases}
\end{equation}
Here $W^1_t,W^2_t,\ldots,W^N_t$ are independent Brownian motions in $\mathbb{R}^d$. We denote $\mathcal{W}_t:=(W^1_t,W^2_t,\ldots,W^N_t)$ and $\mathcal{X}_t:=(X^1_t,X^2_t,\ldots,X^N_t)$  as a solution of equation \eqref{3}.\\

For the logarithmic kernel in two dimensions (\(d=2\)), \cite{Cattiaux2016The2S} studied the equation \eqref{3} using the Dirichlet forms approach. Related works include \cite{cattiaux2024entropy, Fournier2015StochasticPA, Fournier2021CollisionsOT}.

\section{Results}\label{sec2}
To state our main result, we introduce some notations. Throughout the paper, $\mathbb{k}$ denotes a radial kernel,  specifically $\mathbb{k}(x)=\mathbb{k}(\|x\|)$ for $x\in \mathbb{R}^d$, satisfying the condition
$$[\mathrm{H}]_p: \qquad\int_0 |\mathbb{k}'(t)|^{p}\exp\left(-\frac{\chi}{N}\mathbb{k}(t)\right) t^{d-1}dt<\infty, \quad N\geq 2,\,\,\chi>0,\,\,d\geq 2,\,\,p>1.$$
Note that the class $[\mathrm{H}]_p$ differs from the Kato class $\mathrm{K}_d$ in the case of radial kernels; e.g., for $\mathbb{k}_\alpha(x)=|x|^{-2}[-\log|x|]^{-\alpha}$, if $d\geq 3$, then $\mathbb{k}_\alpha$ is in $\mathrm{K}_d$ only if $\alpha>1$, whereas $\mathbb{k}_\alpha$ belongs to $[\mathrm{H}]_p$ for all $\alpha$ and all $p$. For further details on the Kato class see \cite{simon1982schrodinger}.\\

Let $q\in \mathbb{N}$, and define $E_{q}$ as the set given by 
\begin{equation*}
	E_{q} = \left\{ x \in (\mathbb{R}^d)^N \mid \forall \mathrm{K} \subseteq \llbracket 1~, ~N\rrbracket \text{ with } |\mathrm{K}| = q, \,\text{and}\, \sum_{i,j \in \mathrm{K}} \|x^i - x^j\|^2 > 0  \right\},
\end{equation*}
where $\llbracket 1~,~ N\rrbracket$ denotes the set of integers from $1$ to $N$. Set $E_q$ ensures the absence of clusters containing $q$ or more particles occupying the same position. Let $\Ck$ now be defined as 
$$\Ck=\max\left\{n\in \llbracket 2~;~ N+1 \rrbracket: \,\, \int_0\exp\left(-\frac{n\chi}{d N}\,\kk(r)\right)dr<\infty \right\}.$$
The main result of this paper is the following\\

\begin{theorem}\label{1.1}
	Let $N\geq 2$, $\chi>0$ and $d\geq 2$. Assume that $\kk$ is a radial kernel such that $|\kk(0^+)|=+\infty$. 
	Suppose that $[\mathrm{H}]_p$ holds for some $p>d N$. If $\kk$ is an increasing (resp., decreasing) function, we let $\mathbb{E}_{\Delta}= E_{\Ck} \cup \{\Delta\}$ (resp., $\mathbb{E}_{\Delta}=E_2\cup \{\Delta\}$), where  $\Delta$ is a cemetery point, then there exists a unique weak solution $\mathbb{X}=((\mathcal{X}_t)_{t \geq 0}, (\mathbb{P}_x)_{x \in \mathbb{E}_{\Delta}})$ of the stochastic differential equation \eqref{3} such that $\mathbb{P}_x$-almost surely for any $x \in \mathbb{E}_{\Delta}\setminus \{\Delta\}$,
	\begin{equation}\label{5}
		\mathcal{X}_t = x + \sqrt{2} \mathcal{W}_t + \int_0^t\frac{\nabla m}{m}(\mathcal{X}_s) \, ds, \quad t \geq 0,
	\end{equation}
	where $$m(x) = \exp\left(-\frac{\chi}{N} \sum_{1\leq i \neq j \leq N} \kk(\|x^i-x^j\|)\right),\quad x\in (\RR^d)^N .$$
	$\mathbb{X}$ is called distorted Brownian motion.
\end{theorem}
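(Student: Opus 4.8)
The plan is to realise $\mathbb X$ as the diffusion properly associated, in the sense of Dirichlet form theory, with the symmetric form carried by the measure $\mu(dx):=m(x)\,dx$ on $(\RR^d)^N$; to read off the identity~\eqref{5} from the Fukushima decomposition of the coordinate maps; and finally to invoke the $\mathrm{H}_2$-condition of \cite{albeverio2003strong} in order to pass from the ``quasi-everywhere'' statements produced by the abstract theory to the pointwise statement on $\mathbb E_\Delta\setminus\{\Delta\}$, together with uniqueness. Throughout put $E:=\mathbb E_\Delta\setminus\{\Delta\}$, so that $E=E_{\Ck}$ when $\kk$ is increasing and $E=E_2$ when $\kk$ is decreasing, and write $\mathbf b:=\nabla m/m$ for the drift appearing in~\eqref{5}.

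First I would check that $\mu$ is a Radon measure on $E$ of full support and that $\sqrt m\in W^{1,2}_{\mathrm{loc}}(E)$, equivalently $\mathbf b\in L^2_{\mathrm{loc}}(E;\mu)$. Passing to polar coordinates around each collision manifold $\{x^i=x^j\}$ and distributing the weight $m$ among the at most $\binom{N}{2}$ pairs, both facts reduce to the one-dimensional integrability built into $[\mathrm H]_p$; the hypothesis $p>dN$, with $dN$ the dimension of the state space $(\RR^d)^N$, enters here via Hölder's inequality, to decouple the several collision manifolds that can be simultaneously close to a configuration, and it will enter again, decisively, in the last step. By the classical closability criterion for gradient forms with a density in $W^{1,1}_{\mathrm{loc}}$, the bilinear form
\[
	\mathcal E(u,v)=\int_{E}\nabla u\cdot\nabla v\;m\,dx,\qquad u,v\in C_0^\infty(E),
\]
is then closable on $L^2(E;\mu)$, and its closure $(\mathcal E,D(\mathcal E))$ is a strongly local, regular symmetric Dirichlet form on $E$. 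By the Fukushima--Oshima--Takeda theory there is consequently an $m$-symmetric diffusion $\mathbb X=((\mathcal X_t)_{t\ge0},(\mathbb P_x)_x)$ properly associated with $\mathcal E$, defined for quasi-every starting point and with a possibly finite lifetime, which is what the cemetery point $\Delta$ records.

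Next I would run the Fukushima decomposition on the coordinate maps $\pi^i_k\colon x\mapsto x^i_k$ (after the usual localisation, as the $\pi^i_k$ need not be globally in $L^2(\mu)$): this gives $\pi^i_k(\mathcal X_t)-\pi^i_k(\mathcal X_0)=M^{i,k}_t+N^{i,k}_t$ with $M^{i,k}$ a martingale additive functional of finite energy and $N^{i,k}$ a continuous additive functional of zero energy. A computation of the Revuz measures yields $\langle M^{i,k},M^{j,\ell}\rangle_t=2\delta_{ij}\delta_{k\ell}\,t$, so Lévy's characterisation identifies $M:=(M^{i,k})_{i,k}$ with $\sqrt2\,\mathcal W$ for a standard Brownian motion $\mathcal W$ on $(\RR^d)^N$; while the integration by parts $\mathcal E(\pi^i_k,\phi)=\int_E\partial_{x^i_k}\phi\,m\,dx=-\int_E\phi\,\mathbf b_{i,k}\,m\,dx$, legitimate for $\phi\in C_0^\infty(E)$ since $m\in W^{1,1}_{\mathrm{loc}}(E)$, identifies $N^{i,k}_t=\int_0^t\mathbf b_{i,k}(\mathcal X_s)\,ds$. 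This establishes~\eqref{5}, hence a weak solution of~\eqref{3}, for quasi-every starting point.

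What remains --- and is the core of the argument --- is to upgrade ``quasi-every'' to ``every point of $E$'' and to secure uniqueness. The first task is to show that the configurations discarded in forming $\mathbb E_\Delta$ make up an $\mathcal E$-polar (indeed properly exceptional) set: for $\kk$ increasing, that the ``$\Ck$-fold collision'' set $\bigcup_{|\mathrm K|=\Ck}\{x:x^i=x^j\text{ for all }i,j\in\mathrm K\}$ has zero capacity; for $\kk$ decreasing, that already each double-collision manifold $\{x^i=x^j\}$ is $\mathcal E$-polar, where $m$ vanishes. The integer $\Ck$ is tailored so that this succeeds: testing the capacity of a tubular neighbourhood of an $n$-fold collision stratum with logarithmic cut-offs in the variable recording the diameter of the cluster reduces the estimate, after weighing the codimension of the stratum against the order of vanishing or blow-up of $m$ along it, to the finiteness of $\int_0\exp(-\tfrac{n\chi}{dN}\kk(r))\,dr$, which by definition holds exactly for $n\le\Ck$. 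Once this stratum is properly exceptional, $\mathbb X$ may be taken with state space $E$, which $\mathbb P_x$-almost surely it never leaves. For the second task --- starting from every $x\in E$ and uniqueness in law of weak solutions of~\eqref{5}, including across the remaining, non-polar collisions of fewer than $\Ck$ particles where $\mathbf b$ is singular --- I would verify the $\mathrm{H}_2$-condition of \cite{albeverio2003strong}: on $E$ the density $m$ is locally bounded and locally bounded away from $0$, and $\mathbf b\in L^p_{\mathrm{loc}}$ with $p>dN$, which are precisely the hypotheses under which that condition delivers a pointwise-defined strong Markov family solving~\eqref{5} and its uniqueness. Combining the three steps gives the asserted existence and uniqueness of the distorted Brownian motion $\mathbb X$. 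I expect the capacity/polarity step to be the principal obstacle --- calibrating $\Ck$ to the geometry of the collision strata, namely their codimension against the rate at which $m$ degenerates there, matched to the exponent $n\chi/(dN)$ --- together with the verification of the $\mathrm{H}_2$-condition near those non-polar collisions.
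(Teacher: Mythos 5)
Your overall route coincides with the paper's: build the strongly local regular Dirichlet form $\mathcal E_\kk(u,v)=\tfrac12\int\nabla u\cdot\nabla v\,d\mu$ on $C_0^\infty(E_{\Ck})$ (resp.\ $C_0^\infty(E_2)$), check that $\mu$ is Radon and that the form is closable, obtain the associated $\mu$-symmetric diffusion solving the martingale problem for $\Do$ quasi-everywhere, and then invoke the $\mathrm H_2$-condition of Albeverio et al.\ to upgrade to every starting point and to get uniqueness. Two remarks on structure. First, the polarity/capacity analysis of the discarded collision strata, which you single out as ``the core of the argument,'' is not part of the paper's proof and is not needed in its formulation: the form is defined from the outset on the open set $E_{\Ck}$ (resp.\ $E_2$), whose role is only to make $\mu$ a Radon measure there (Theorem 4.1); the complement is simply not in the state space, and possible exit is absorbed by the cemetery point $\Delta$. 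The integer $\Ck$ is calibrated in the Radon-measure estimate (via an AM--GM argument reducing to $\int_0\exp(-\tfrac{n\chi}{dN}\kk(r))dr<\infty$), not in a capacity estimate, and the hypothesis $p>dN$ plays no role at that stage.

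Second, and this is a genuine gap, your verification of the $\mathrm H_2$-condition would fail as written in the increasing case. You propose to check it by observing that ``on $E$ the density $m$ is locally bounded and locally bounded away from $0$'' together with $\mathbf b\in L^p_{\mathrm{loc}}$, $p>dN$. But for $\kk$ increasing with $|\kk(0^+)|=+\infty$ one has $\kk(0^+)=-\infty$, so $m\to+\infty$ along every double collision $\{x^i=x^j\}$, and such configurations are \emph{not} removed from $E_{\Ck}$ when $\Ck>2$; hence $m$ is not locally bounded on $E_{\Ck}$, and an unweighted $L^p_{\mathrm{loc}}$ bound on $\mathbf b$ cannot be converted to the required one by boundedness of the density. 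The condition that must be verified is the weighted estimate $\int_{\mathrm B(0,\delta)}\|\nabla m/m\|^p\,m\,dx<\infty$ for some $p>dN$ (i.e.\ $\|\nabla m/m\|\in L^{dN+\epsilon}_{\mathrm{loc}}(\mu)$), and this is exactly where the paper does quantitative work: Proposition 4.5 expands $\nabla m/m$ into pairwise terms, integrates out all but one pair, and reduces the weighted integral to the radial quantity $\int_0|\kk'(t)|^p\exp(-\tfrac{\chi}{N}\kk(t))t^{d-1}dt$, i.e.\ precisely to the hypothesis $[\mathrm H]_p$ with $p>dN$. Your proposal never connects $[\mathrm H]_p$ to the $\mathrm H_2$-condition, which is the one place in the proof where that hypothesis is actually consumed; without this step the passage from quasi-everywhere to every $x\in\mathbb E_\Delta\setminus\{\Delta\}$, and the uniqueness claim, are unsupported.
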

\section{Dirichlet forms}\label{Dirichletform}
In this section, we provide a summary of Dirichlet form theory and its connections with stochastic process theory. For a more detailed discussion, the reader can refer to \cite{book:597396, ma2012introduction}. \\

Let \(E \) be a locally compact, separable metric space with Borel $\sigma$-algebra \(\mathcal{B}(E)\), and let \(\mu\) be a positive Radon measure with full support on  \(E\). The scalar product in \(L^2(E, \mu)\) is denoted by \(\langle \cdot, \cdot \rangle\), while  \(C_0(E)\) is the space of continuous functions with compact support on \(E \).

\subsection{Basic definitions} A symmetric form on $L^2(E,\mu)$ is a positive bilinear form $\mathcal{E}$ defined on a dense linear subspace $\mathcal{D(E)}\subset L^2(E, \mu)$. The space $\mathcal{D(E)}$ becomes a pre-Hilbert space with respect to the inner product $\mathcal{E}_1(\cdot, \cdot) = \mathcal{E}(\cdot, \cdot) + \langle \cdot, \cdot \rangle$. We say that the symmetric form $(\mathcal{E}, \mathcal{D(E)})$  is closed if $\mathcal{D(E)}$ is complete. A Dirichlet form on $L^2(E, \mu)$ is a closed symmetric form $(\mathcal{E}, \mathcal{D(E)})$ that is also Markovian, i.e., for any contraction $\mathrm{T}$ on $\mathbb{R}$ and any $u \in \mathcal{D(E)}$,  $\mathrm{T}(u) \in \mathcal{D(E)}$ and  $\mathcal{E}(\mathrm{T}(u), \mathrm{T}(u)) \leq \mathcal{E}(u, u)$. If \(\mathcal{E}\) admits a closed extension, it is said to be closable.  However this extension is not unique. To ensure the form remains Markovian, we work with the smallest closed extension that preserves the Markovian property see for instance \cite[Theorem 3.1.1]{book:597396}. \\

A Dirichlet form  \(\mathcal{E}\) on $L^2(E,\mu)$  is said to be regular if and only if the set \(\mathcal{C}:=\mathcal{D(E)} \cap C_0(E)\) is dense in both  \(\mathcal{D(E)}\) with respect to the \(\mathcal{E}_1\)-norm and \(C_0(E)\) with respect to the uniform norm.  For regular forms, Beurling-Deny and LeJan provide a general integral representation
\[
\mathcal{E}(u, v) = \mathcal{E}^{(c)}(u, v) + \int_{E \times E \setminus \mathrm{d}} (u(x) - u(y))(v(x) - v(y)) J(dx, dy) + \int_E u(x) v(x) k(dx),
\]
where
\begin{itemize}
	\item \(\mathcal{E}^{(c)}\) is a symmetric form with domain \(\mathcal{C}\) and satisfies the strong local property, that is,  \(\mathcal{E}(u, v) = 0\) for \(u, v \in \mathcal{C}\) such that  \(v\) is constant on a  neighborhood of \(\text{supp}[u]\).
	\item $J$ is a symmetric positive Radon measure called the \textit{jumping measure}.
	\item $k$ is a positive Radon measure called the \textit{killing measure}.
\end{itemize}
There exists a one-to-one correspondence between the family of closed symmetric forms $\mathcal{E}$ and the family of negative self-adjoint operators $\mathcal{L}$ on $L^2(E,\mu)$. This correspondence satisfies
$$\begin{cases}
	\mathcal{E}(u,v)=\langle -\mathcal{L} u,v\rangle, \quad u \in \mathcal{D}(\mathcal{L}), v \in \mathcal{D(E)},\\
	\mathcal{D(L)} \subset \mathcal{D(E)}.
\end{cases}$$
Furthermore, let \(\mathrm{(T_t)_{t\geq 0}}\) be the semigroup associated with the generator $\mathcal{L}$. The Markovian nature of the form is reflected in the Markovian property of the semigroup $\mathrm{T_t}$, i.e., if $0 \leq u \leq 1$ almost everywhere (a.e.) with respect to $\mu$, then $0 \leq \mathrm{T_t} u \leq 1$ a.e. for all $t\geq 0$. \\

\subsection{Symmetric Hunt process}
We introduce a new point $\Delta \notin E$. Let $E_{\Delta} = E \cup \{\Delta\}$ denote the one-point compactification of $E$ and let $\mathcal{B}(E_{\Delta}) = \sigma(\mathcal{B}(E), \{\Delta\})$ be the sigma algebra generated by $\mathcal{B}(E)$ and $\{\Delta\}$. A Hunt process $\mathbb{X} = (\Omega, \mathcal{F}, (\mathcal{F}_t)_{t \geq 0}, \tau, (X_t)_{t \geq 0}, (\mathbb{P}_x)_{x \in E_{\Delta}})$ is a homogeneous, quasi-left-continuous Markov process with càdlàg paths (i.e., right-continuous with left-limits) and lifetime $\tau = \inf \{ t \geq 0: X_t = \Delta \}$, which satisfies the strong Markov property. See for instance Appendix A.2 of \cite{book:597396}. For any $x \in E$ and $A \in \mathcal{B}(E)$, we define
\[
\mathbb{P}_x(X_t \in A) = \mathrm{P_t} \mathbb{1}_A(x),
\]
where $\mathrm{(P_t)_{t \geq 0}}$ is a Markovian transition function (or semigroup) associated with the $\mu$-symmetric Hunt process $(X_t)_{t \geq 0}$, i.e., $\int_E u \mathrm{P_t} v \, \mathrm{d} \mu = \int_E v \mathrm{P_t} u \, \mathrm{d} \mu$ for all bounded measurable functions $u, v$. Therefore constructing a regular Dirichlet form from a Hunt process is not a difficult task. \\

The converse turns out to be true as well, but the problem lies in the construction of the transition function associated with this process. Let $\mathcal{E}$ be a regular Dirichlet form on $L^2(E,\mu)$. We can associate a Markovian semigroup $\mathrm{(T_t)_{t\geq 0}}$ with $\mathcal{E}$. A natural approach to construct transition probabilities is to define \( \mathrm{P_t}(x,A)= \mathrm{T_t}\mathbb{1}_A(x) \) for a.e \( x \in X \) and \( A \in \mathcal{B}(E) \). However the Chapman-Kolmogorov equations, which are essential for consistency in a Markov process, only hold for sets of non-zero capacity with respect to $\mathcal{E}$. Therefore constructing a Hunt process $\mathbb{X}=(X_t)_{t\geq 0}$ associated with $\mathcal{E}$ requires systematically avoiding sets of zero capacity. If, moreover \(\mathcal{E}\) possesses the strong local property, \(\mathbb{P}_x\) can be modified only on a properly exceptional set such that \(\mathbb{P}_x\left(\tau < \infty, X_{\tau -} = \triangle\right) = 1\) for all \(x \in E\). This ensures that for each \(x \in E\), \(\mathbb{P}_x\)-almost surely, the path \(t \mapsto X_t\) is continuous from \([0, \infty)\) to \(E_{\triangle}\). Consequently \(\mathbb{X}\) is a diffusion process. For a detailed exploration of the subject, we refer to \cite[Chapter 7]{book:597396}.

\subsection{Distorted Brownian Motion}\label{sec.3} Let $E$ be a locally compact subset of $\RR^d$, where $d\geq 2$, and let $\mu(dx)=m(x) dx$. Define the set
$$
S(m):=\left\{x\in E\,\,:\,\, \int_{y\in \mathrm{B}(x,\epsilon)}\frac{1}{m(y)}dy=\infty\,\, \text{for all}\,\,  \epsilon>0\right\}.
$$
The bilinear form \(\mathcal{E}_m\) associated with \(m\) is defined by
$$
\mathcal{E}_m(u, v) = \frac{1}{2}\int_{\mathbb{R}^d} \nabla u \cdot \nabla v \, m(x) \, dx, \quad u, v \in C_0^\infty(E).
$$
According to  \cite{rockner1985dirichlet}, a sufficient condition for \((\mathcal{E}_m,C_0^\infty(E))\) to be closable is that the Lebesgue measure of \( S(m) \) is zero. Let $(\mathcal{E}_m,\mathcal{D}(\mathcal{E}_m))$  be the smallest extension such that $\mathcal{E}_m$ is a Dirichlet form. The associated infinitesimal generator $\mathcal{L}$ is then given by 
$$\mathcal{L}:= \frac{1}{2}\Delta+\frac{\nabla m}{2m}(x).\nabla.$$ 
The distorted Brownian motion (DBM)  associated with the
measure $\mu$ is  a diffusion process  $\mathbb{X} = ((X_t)_{t \geq 0}, (\mathbb{P}_x)_{x \in E_\Delta})$
with infinitesimal generator $\mathcal{L}$. Fukushima has explained, in a broader context, the identification of the DBM as a weak solution of the stochastic differential equation
\begin{equation}\label{DBM}
	X_t = x + \sqrt{2} W_t + \int_0^t \frac{\nabla m}{m}(X_s) \, ds, \quad t \geq 0
\end{equation}
for quasi-every starting point $x$. This approach causes us to lose a lot in terms of the uniqueness of the solution. However  under condition $\frac{\|\nabla m\|}{m}  \in L^{d+\epsilon}_{\text{loc}}(\mathbb R^d, \mu)$ for some $\epsilon > 0$, a weak solution, in the sense of the martingale
problem,  has been constructed starting from any point $x \in \{m \neq 0\}$  \cite{albeverio2003strong}. \cite{Shin2014OnTS} explain the connection between the weak solution of equation \eqref{DBM} and the generator $\mathcal{L}$. They develop general tools to apply Fukushima's absolute continuity condition, enabling the construction of a Hunt process with a transition function that admits a density $p_t(x,y)$ with respect to $\mu$.  Solutions can start from any point in the explicitly specified state space.
\section{Proof of Theorem \ref{1.1}}
In this section, we construct a unique weak solution for the system (\ref{3}). We begin with results that extend those in \cite{Fournier2021CollisionsOT} to general monotone radial kernels $\kk$. \\

Let  $x=(x^1,x^2,\ldots,x^N)\in (\RR^d)^N$ be a point in the $N$-fold product space of $\RR^d$ and let $\epsilon>0$. We define
$$\mu(dx)=m(x)dx, \quad \text{where} \quad m(x) = \exp\left(-\frac{\chi}{N} \sum_{(i,j)\in \llbracket 1~;~ N\rrbracket^2\setminus d} \kk(\|x^i-x^j\|)\right),\nonumber$$
and consider 
\begin{itemize}
	\item $\mathrm{B}(x,\epsilon):=\{y\in (\RR^d)^N:\,\, |x-y|<\epsilon\}$, the open ball centered at $x$ with radius $\epsilon$ in the product space, where $|x-y|^2=\sum_{i=1}^{N}\|x^i-y^i\|^2$.\\
	\item  $\mathrm{b}(x^i,\epsilon)=\{y^i\in \RR^d:\,\, \|x^i-y^i\|<\epsilon\}$, the open ball centered at $x^i$ with radius $\epsilon$ in $\RR^d$ (individual component space). 
\end{itemize}
It is well known that
$$
\mathrm{B}(x,\epsilon)\subset \prod_{i\in \llbracket 1~; ~ N\rrbracket} \mathrm{b}(x^i,\epsilon),\quad x=(x^1,x^2,\ldots,x^N)\in (\RR^d)^N.
$$
\begin{theo}\label{1.1.1}
	Let \( N \geq 2 \) and \( \chi > 0 \). Assume that \(\kk\) is a radial kernel. 
	\begin{enumerate}
		\item If \( \kk \) is an increasing function, then \( \mu \) is a Radon measure on \( E_\Ck \).
		\item If \( \kk \) is a decreasing function, then \( \mu \) is a Radon measure on \( (\RR^d)^N \).
	\end{enumerate}
\end{theo}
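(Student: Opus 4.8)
\emph{Reduction.} The first step is to note that since $\mu=m\,dx$ with $m\ge 0$ Borel, $\mu$ is a Radon measure on an open set $U\subseteq(\RR^d)^N$ exactly when $m\in L^1_{\mathrm{loc}}(U)$, so in both parts it suffices to bound $\int_C m\,dx$ for $C$ compact; write $R:=\sup_{x\in C}\max_{i\neq j}\|x^i-x^j\|<\infty$. Part (2) is then immediate: if $\kk$ is decreasing it is finite on $(0,\infty)$ and $\kk(\|x^i-x^j\|)\ge\kk(R)$ for all $x\in C$, so $m(x)\le\exp(-\chi(N-1)\kk(R))$ on $C$ and $m\in L^\infty_{\mathrm{loc}}((\RR^d)^N)$. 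The same estimate settles Part (1) whenever $\kk(0^+)$ is finite (then $\kk$ is bounded near $0$, $m$ is locally bounded, and in fact $\Ck=N+1$ and $E_\Ck=(\RR^d)^N$), so the substantive case is an increasing $\kk$ with $\kk(0^+)=-\infty$, which I would handle as follows.

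\emph{The core case.} Fix a compact $C\subseteq E_\Ck$ and write $m=\prod_{i<j}\exp(-\tfrac{2\chi}{N}\kk(\|x^i-x^j\|))$. Since $\kk$ is increasing, $\kk(\|x^i-x^j\|)\le\kk(2R)$ on $C$, so every factor of $m$ is bounded below and $m$ can blow up only where some $\|x^i-x^j\|\to 0$. Moreover the function $\Phi(x):=\min_{|\mathrm{K}|=\Ck}\max_{i,j\in\mathrm{K}}\|x^i-x^j\|$ is continuous on $(\RR^d)^N$ with $E_\Ck=\{\Phi>0\}$, so $\delta:=\min_{C}\Phi>0$; hence for $x\in C$ no set of $\Ck$ of the points $x^1,\dots,x^N$ has diameter $<\delta$. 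I would then bound $\int_C m\,dx$ by resolving the collision singularities scale by scale: decompose $C$ into finitely many Borel pieces according to the hierarchical clustering pattern of the configuration at the scales below $\delta$ (i.e.\ to the relative orders of magnitude of the interparticle distances), and on each piece blow up the clusters one at a time. A cluster of $k$ points — which by the separation property has $k\le\Ck-1$ — is written $x^{i_\ell}=x^{i_1}+\rho\,\widehat\omega_\ell$ with $(\widehat\omega_2,\dots,\widehat\omega_k)$ on the unit sphere of $\RR^{d(k-1)}$, so that $dx^{i_2}\cdots dx^{i_k}=\rho^{d(k-1)-1}\,d\rho\,d\sigma$ and $\|x^{i_\ell}-x^{i_{\ell'}}\|=\rho\|\widehat\omega_\ell-\widehat\omega_{\ell'}\|$; on the part of the sphere where $\|\widehat\omega_\ell-\widehat\omega_{\ell'}\|\ge\varepsilon_0$ for all $\ell\neq\ell'$, monotonicity of $\kk$ bounds the within-cluster product by $\exp(-\tfrac{2\chi}{N}\binom{k}{2}\kk(\varepsilon_0\rho))$, and the substitution $r=\varepsilon_0\rho$ turns the corresponding $\rho$-integral into
\[
\int_0\exp\!\Big(-\tfrac{\chi k(k-1)}{N}\,\kk(r)\Big)\,r^{\,d(k-1)-1}\,dr .
\]
The remaining variables — the relative positions of the clusters, and the sphere directions along which some $\|\widehat\omega_\ell-\widehat\omega_{\ell'}\|$ is small, i.e.\ genuine sub-collisions — I would treat by iterating this step, an induction on $N$, each stage again producing a radial integral of the above shape with some $k\le\Ck-1$ while the angular integrations and the outermost non-colliding positions contribute only bounded factors. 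Making this clustering decomposition precise and checking that the successive blow-ups compose is where most of the work sits; it is exactly where the combinatorial argument of \cite{Fournier2021CollisionsOT} must be carried over from the planar logarithmic kernel to a general monotone radial $\kk$, and I expect it to be the main obstacle.

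\emph{Finiteness of the radial integrals.} Finally, for $2\le k\le\Ck-1$ the displayed integral is finite by the following elementary fact, which I would prove first: if $\psi:(0,r_0]\to(0,\infty)$ is decreasing with $\int_0\psi<\infty$ and $D\ge1$, then $\int_0\psi(r)^D r^{D-1}\,dr<\infty$ — indeed $\psi$ decreasing and integrable near $0$ forces $r\,\psi(r)\le\int_0^r\psi\to0$, so $(r\psi(r))^{D-1}$ is bounded near $0$ and $\psi(r)^D r^{D-1}=(r\psi(r))^{D-1}\psi(r)\le C\,\psi(r)$. Applying this with $D=d(k-1)$ and $\psi(r)=\exp(-\tfrac{\chi k}{dN}\kk(r))$ (decreasing because $\kk$ is increasing) gives the claim, since $\psi(r)^D r^{D-1}=\exp(-\tfrac{\chi k(k-1)}{N}\kk(r))\,r^{d(k-1)-1}$ and, because $k\le\Ck-1<\Ck$ and $\kk(0^+)=-\infty$,
\[
\int_0\psi=\int_0\exp\!\Big(-\tfrac{\chi k}{dN}\,\kk(r)\Big)\,dr\ \le\ \int_0\exp\!\Big(-\tfrac{\Ck\chi}{dN}\,\kk(r)\Big)\,dr\ <\ \infty
\]
by the definition of $\Ck$. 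Feeding these finite radial integrals back through the decomposition then yields $\int_C m\,dx<\infty$, so $\mu$ is a Radon measure on $E_\Ck$.
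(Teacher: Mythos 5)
Your reductions are sound --- part (2), the non-singular case of part (1), and the elementary lemma showing $\int_0\psi(r)^D r^{D-1}\,dr<\infty$ for decreasing integrable $\psi$ are all correct --- but the core of part (1) is not actually proved. The entire singular case rests on the ``hierarchical clustering decomposition'' of the compact set $C$ and the iterated blow-ups, and you explicitly leave that step unexecuted (``where most of the work sits'', ``the main obstacle''). This is a genuine gap, not a routine verification: after one polar blow-up of a $k$-cluster you only control the region of the sphere where all angular separations exceed $\varepsilon_0$; on the complementary region the $\widehat\omega_\ell$ themselves collide, the integrand is again singular, and you must recurse, organizing the recursion over all nested cluster patterns at all scales simultaneously. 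That is precisely the delicate combinatorial argument of Fournier--Jourdain, and carrying it over from the planar logarithmic kernel to a general monotone radial $\kk$ is the substance of the theorem, not a deferrable detail.

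The paper avoids all of this by exploiting that being Radon is a purely local condition: it suffices to exhibit, around each $x\in E_{\Ck}$, one neighborhood of finite measure. Taking $\mathcal{O}_x=\prod_i\mathrm{b}(x^i,r_x)$ with $3r_x\le\min\{\|x^i-x^j\|:x^i\ne x^j\}$ freezes the cluster pattern: inside $\mathcal{O}_x$ particles either coincide at $x$ or remain at distance $\ge r_x$, so the inter-cluster factors of $m$ are bounded by a constant and only the within-cluster factors are singular --- there is a single scale and no hierarchy. The remaining within-cluster integral over a full product of balls is then handled with no angular decomposition at all: since $\|z^i-z^j\|\ge|t^i_k-t^j_k|$ coordinatewise and $\kk$ is increasing, the $d$-dimensional integral is dominated by the $d$-th power of a one-dimensional one, which the AM--GM inequality reduces to $\bigl(\int_0\exp(-\tfrac{n\chi}{dN}\kk(r))\,dr\bigr)^{n-1}$, finite for $n\le\Ck$ by the very definition of $\Ck$. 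If you localize first in this way, your polar-coordinate machinery becomes unnecessary and your radial-integral lemma is not even needed; if you insist on estimating all of $C$ at once, you must supply the full multiscale decomposition, which your proposal does not.
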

\begin{proof}
To prove part $(1)$, we fix a point $x \in E_\Ck$ and consider the partition $\mathrm{K}_1, \ldots, \mathrm{K}_\ell$ of $\llbracket 1~;~ N \rrbracket$ such that for all $p \neq q$ in $\llbracket 1~;~ \ell\rrbracket$, we have
$$
\begin{cases}
    x^i = x^j, & \text{for } i, j \in \mathrm{K}_p, \\
    x^i \neq x^j, & \text{for } i \in \mathrm{K}_p \text{ and } j \in \mathrm{K}_q,
\end{cases}
$$
We set
\[ \mathcal{O}_x = \prod_{i\in \llbracket 1~; ~ N\rrbracket}\mathrm{b}(x^i,r_x), \]
where
\[ r_x = \min \left(1, \min \left(\frac{\|x^i - x^j\|}{3} : i, j \in \llbracket 1~,~N\rrbracket\,\ \text{such that} \ x^i \neq x^j\right)\right) > 0 .\]
We begin by observing that
 $$\llbracket 1~;~N\rrbracket^2\setminus \mathrm{d}=\left(\bigcup_{p=1}^{\ell} \mathrm{K}_p\times \mathrm{K}_p\setminus \mathrm{d}\right)\bigcup \left(\bigcup_{\substack{p\not=q \\ p,q=1}}^{\ell} \mathrm{K}_p\times \mathrm{K}_q\right),\,\text{where}\,\, \mathrm{d}=\{(i,i):\,\, i\in \llbracket 1~;~N\rrbracket\}.$$
 Then for all $y \in \mathcal{O}_x$ we have
 
	{\small\begin{align*}
		m(y)&=\left(\prod_{p=1}^{\ell} \prod_{(i,j)\in \mathrm{K}_p^2\setminus \mathrm{d}}\exp\left(-\frac{\chi}{N} \kk(\|y^i-y^j\|)\right)\right)\times \left(\prod_{\substack{p\not= q\\ p,q=1}}^{\ell}\prod_{(i,j)\in \mathrm{K}_p\times \mathrm{K}_q}\exp\left(-\frac{\chi}{N} \kk(\|y^i-y^j\|)\right)\right)\\
		&:= m_1(y) \times m_2(y). \\ 
	\end{align*}}
 In the case of every distinct pair $p \neq q$ in $\llbracket1~;~\ell\rrbracket$, for  $i \in \mathrm{K}_p$ and $j \in  \mathrm{K}_q$, we obtain
 \[
	||y^i - y^j|| \geq ||x^i -x^j|| - ||x^i - y^i|| - ||x^j - y^j|| \geq ||x^i -x^j|| - 2r_x \geq r_x.
	\]
	Since $\kk$ is an increasing function, we have $
	m_2(y)\leq \exp\left(-\frac{\chi c_l}{N}\kk(r_x)\right)$ with $c_l=\sum_{\substack{p\not=q,\\ p,q=1}}^{\ell} \Card(\mathrm{K}_p\times \mathrm{K}_q)$.  Combining these inequalities, we obtain
 \begin{align}\label{weightineq}
		m(y)\leq C_x \times m_1(y),\quad y \in \mathcal{O}_x,	
	\end{align}
	where $C_x=\exp\left(-\frac{\chi c_l}{N}\kk(r_x)\right)$.  On the other hand, using the following identification
 {\small$$
	y=(y^1,y^2,\ldots,y^N)\in\prod_{i\in \llbracket 1~; ~ N\rrbracket}\mathrm{b}(x^i,r_x)\Leftrightarrow y=(Y^1,Y^2,\ldots,Y^{\ell})\in \prod_{p=1}^{\ell}\left(\prod_{i\in \mathrm{K}_p}\mathrm{b}(x^i,r_x)\right),
	$$}
 we find that
$$m_1(y)=\prod_{p=1}^{\ell}\left(\prod_{(i,j)\in \mathrm{K}_p^2\setminus \mathrm{d}}\exp\left(-\frac{\chi}{N} \kk(\|y^i-y^j\|)\right)\right)=\prod_{p=1}^{\ell}m_{1,p}(Y^p),$$
	where 
	$$m_{1,p}(Y^p)=\prod_{(i,j)\in \mathrm{K}_p^2\setminus \mathrm{d}}\exp\left(-\frac{\chi}{N} \kk(\|y^i-y^j\|)\right).$$
 Without loss of generality, we identify  $\mathrm{K}_p$ by the set $\llbracket 1~;~ n\rrbracket$ for any $p\in \llbracket 1~; ~\ell\rrbracket$. By the inequality \eqref{weightineq} we get 
	{\small \begin{align*}
		\mu(\mathcal{O}_x)
		&\leq C_x\prod_{p=1}^{\ell}\int_{(z^1,z^2,\ldots,z^n)\in (\mathrm{b}(0,1))^n}\prod_{(i,j)\in \llbracket 1~;~ n\rrbracket^2\setminus \mathrm{d}}\exp\left(-\frac{\chi}{N} \kk(\|z^i-z^j\|)\right) dz^1dz^2\ldots dz^n.
	\end{align*}}
 For all $i\in \llbracket1~;~n\rrbracket$, we set $z^i=(t^i_1,t^i_2,\ldots,t^i_d) \in \mathbb{R}^d$. Since $||z^i||\geq
	|t^i_k|$ with $k=1,\ldots, d$.
	We obtain that $I_n\leq J_n^d$, where
	$$I_n:= \int_{(z^1,z^2,\ldots,z^n)\in (\mathrm{b}(0,1))^n}\prod_{(i,j)\in \llbracket 1~;~ n\rrbracket^2\setminus \mathrm{d}}\exp\left(-\frac{\chi}{N} \kk(\|z^i-z^j\|)\right) dz^1dz^2\ldots dz^n,$$
	and
	$$
	J_n:=\int_{(t^1,t^2,\ldots,t^n)\in [-1,1]^n}\prod_{(i,j)\in \llbracket 1~;~ n\rrbracket^2\setminus \mathrm{d}}\exp\left(-\frac{\chi}{d N}\kk(|t^i-t^j|)\right)dt^1dt^2\ldots dt^n.
	$$ 
	Indeed, all of the integrals listed below are equal to each other. Thus by the inequality of arithmetic and geometric means, we obtain the following result
 \begin{align*}
		J_n
		&\leq \frac{1}{n}\sum_{i=1}^n\int_{(t^1,t^2,\ldots,t^n)\in [-1,1]^n} \prod_{(i,j)\in \llbracket 1~;~ n\rrbracket^2\setminus \mathrm{d}} \exp\left(-\frac{n\chi}{d N}\kk(|t^i-t^j|)\right)dt^1dt^2\ldots dt^n\\
		&=\int_{t\in [-1,1]}\left(\int_{s\in [-1,1]}\exp\left(-\frac{n\chi}{d N}\kk(|s-t|)\right)ds\right)^{n-1}dt	\\
		&\lesssim \left(\int_0\exp\left(-\frac{n\chi}{d N}\kk(r)\right)dr\right)^{n-1}.
	\end{align*}
	Since $x\in E_{\Ck}$, we get $n\leq \Ck$, hence $\mu(\mathcal{O}_x)<\infty$. \\
	To prove part $(2)$, let us consider $x \in (\mathbb{R}^d)^N$ and $y$ 
 $\in \prod_{i \in \llbracket 1~;~ N \rrbracket} \mathrm{b}(x^i, r_x)$. We note that
	\begin{align*}
		\|y^i-y^j\|\leq \|y^i-x^i\|+\|x^i-x^j\|+\|x^j-y^j\| \leq 2r_x+\|x^i-x^j\|.
	\end{align*}
	Since $-\kk$ is an increasing function, it follows that $
	-\kk(\|y^i-y^j\|)\leq -\kk(2r_x+\|x^i-x^j\|)$, for $(i,j)\in \llbracket 1~;~ N\rrbracket^2\setminus \mathrm{d}$. Consequently we have $$m(y)\leq \prod_{(i,j)\in \llbracket 1~;~N\rrbracket^2\setminus \mathrm{d}}\exp\left(-\kk(2r_x+\|x^i-x^j\|)\right).$$ Thus $\mu(\mathcal{O}_x)\lesssim C_x$, where $C_x$ depends only on $x$.
\end{proof}

For increasing functions \( \kk \) that do not exhibit a singularity near zero, i.e., \( \kk(0^+) \) exists, the integral
$$
\int_0 \exp\left(-\frac{n \chi}{d N} \kk(r)\right) dr 
$$
is finite for all $n\in \NN$. In particular,
$$
\Ck = \max\left\{ n \in \llbracket 2~;~ N+1 \rrbracket \,\,:\,\, \int_0\exp\left(-\frac{n \chi}{d N} \kk(r)\right) dr < \infty \right\} = N+1.
$$
Thus \( E_\Ck = (\RR^d)^N \). Otherwise, we obtain the following proposition.\\
\begin{propo}
	Let $N\geq 2$, and $\chi>0$. Assume that $\kk$ is an increasing radial function. If 
	$$
	\int_0\exp\left(-\frac{\Ck(\Ck-1)\chi}{N}\kk(r)\right)r^{d(\Ck-2)}dr=\infty,
	$$
	then  $\mu$ is not a Radon measure on $E_{\Ck+1}$.
\end{propo}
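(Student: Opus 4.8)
The plan is to produce a configuration $x^\star\in E_{\Ck+1}$ together with a compact neighbourhood on which $\mu$ has infinite mass; since a Radon measure is finite on compacts, this proves the claim. I would take $x^\star$ realising the largest cluster that $E_{\Ck+1}$ permits, namely $\Ck+1$ coordinates equal to a common value $0$ and the remaining $N-\Ck-1$ coordinates pairwise distinct and at positive distance from $0$ and from one another. Choosing $r_\star>0$ small enough that the balls $\mathrm{b}(x^{\star i},r_\star)$ with $i>\Ck+1$ and $\mathrm{b}(0,r_\star)$ are mutually at positive distance, $\overline{\mathcal{O}_{x^\star}}=\prod_i\overline{\mathrm{b}(x^{\star i},r_\star)}$ is a compact subset of $E_{\Ck+1}$. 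Exactly as in the proof of Theorem \ref{1.1.1}, I factor $m=m_1m_2$ with $m_1$ carrying the pairs inside the big cluster; since $\kk$ is increasing it is bounded above on the compact range of distances entering $m_2$, so $m_2\geq c>0$ on $\mathcal{O}_{x^\star}$. The problem is thereby reduced to showing that the cluster integral
\[
\int_{\mathrm{b}(0,r_\star)^{\Ck+1}}\ \prod_{(i,j)\in\llbracket 1;\Ck+1\rrbracket^2\setminus\mathrm{d}}\exp\!\Big(-\tfrac{\chi}{N}\,\kk(\|z^i-z^j\|)\Big)\,dz^1\cdots dz^{\Ck+1}
\]
diverges.

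For the lower bound I would restrict to a thin, one-scale, non-degenerate family of configurations. First, pull the coordinate $z^{\Ck+1}$ out to a fixed annulus $\mathrm{b}(0,r_\star)\setminus\mathrm{b}(0,\varepsilon_0)$: it contributes a positive constant in volume, and — since all its distances to the other cluster points then stay in a fixed compact subset of $(0,\infty)$, on which $\kk$ is bounded — its $2\Ck$ interaction factors contribute another positive constant. For the remaining $\Ck$ coordinates, I fix $z^1$ in a small ball (a positive constant in volume), set $w^i=z^i-z^1$, and pass to polar coordinates $(w^2,\dots,w^\Ck)=r\,\Theta$ in $\RR^{d(\Ck-1)}$, keeping only those directions $\Theta$ for which all the mutual distances $\|\theta^i-\theta^j\|$ (with $\theta^1:=0$) lie in a fixed interval $[c_0,C]$; this is a nonempty open subset of the sphere, hence of positive surface measure. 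On it $\|z^i-z^j\|=r\|\theta^i-\theta^j\|\le Cr$ for all $1\le i\ne j\le\Ck$, and as there are exactly $\Ck(\Ck-1)$ such ordered pairs, monotonicity of $\kk$ gives $\prod_{i\ne j}\exp(-\tfrac{\chi}{N}\kk(\|z^i-z^j\|))\ge\exp(-\tfrac{\Ck(\Ck-1)\chi}{N}\kk(Cr))$. Collecting the (finitely many) constant factors together with the Jacobian of the polar change of variables, the lower bound reduces to a positive multiple of $\int_0\exp(-\tfrac{\Ck(\Ck-1)\chi}{N}\kk(Cr))\,r^{\,d(\Ck-2)}\,dr$, and the harmless substitution $r\mapsto Cr$ turns this into a constant times the integral assumed divergent. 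Hence $\mu(\overline{\mathcal{O}_{x^\star}})=+\infty$ and $\mu$ is not a Radon measure on $E_{\Ck+1}$.

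The main obstacle is the bookkeeping in the second step: verifying that this localisation really leaves the weight $r^{d(\Ck-2)}$, i.e. accounting precisely for which of the $d(\Ck+1)$ degrees of freedom scale with $r$ (and thus feed the Jacobian $r^{\,\cdot}\,dr$) and which are absorbed into the constant factors (the anchored $z^1$, the distant $z^{\Ck+1}$, the restriction of $\Theta$ to a fixed shell of shapes), and checking that restricting to a positive-measure set of shapes — rather than to a measure-zero slice — already exhibits the singularity of $m$ near the full cluster. The remaining ingredients are routine: for increasing $\kk$, $|\kk(0^+)|=+\infty$ forces $\kk(0^+)=-\infty$, so $\exp(-c\,\kk(r))\to+\infty$ as $r\downarrow0$, which is what permits the displayed integral to blow up, and monotonicity of $\kk$ is used repeatedly to pass from distance inequalities to weight inequalities. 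Finally, it is worth recording why $x^\star$ must carry a cluster of size $\Ck+1$ and not merely $\Ck$: by Theorem \ref{1.1.1} a cluster of size $\leq\Ck$ already yields a finite local mass, so the divergence genuinely comes from the extra particle, via the size-$\Ck$ non-degenerate sub-cluster sitting inside the size-$(\Ck+1)$ one.
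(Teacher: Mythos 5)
Your overall strategy (exhibit a compact set of infinite $\mu$-mass, then bound $\mu$ from below by restricting to a positive-measure cone of near-cluster configurations at a single scale $r$) is the same as the paper's, but there are two genuine problems. The first is the choice of witness set. By definition, $E_{\Ck+1}$ \emph{forbids} clusters of $\Ck+1$ or more coincident particles: if $x^\star$ has $\Ck+1$ coordinates equal to $0$ and $\mathrm{K}$ is the corresponding index set, then $\sum_{i,j\in\mathrm{K}}\|x^{\star i}-x^{\star j}\|^2=0$, so $x^\star\notin E_{\Ck+1}$ and $\overline{\mathcal{O}_{x^\star}}$ is not a compact subset of $E_{\Ck+1}$. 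Showing that this set carries infinite mass therefore says nothing about whether $\mu$ is Radon \emph{on} $E_{\Ck+1}$, since a Radon measure need only be finite on compact subsets of the space. Your closing justification for insisting on a $(\Ck+1)$-cluster rests on this misreading: the largest cluster $E_{\Ck+1}$ permits has size $\Ck$, and that is exactly what the paper exploits. Its compact set $K=\prod_{i=1}^{\Ck}\Bar{\mathrm{b}}(0,1)\times\prod_{k=\Ck+1}^{N}\Bar{\mathrm{b}}((2k,0,\ldots,0),\tfrac12)$ lies in $E_{\Ck+1}$ because only $\Ck$ of the balls overlap, and the divergence is produced by the $\Ck$-particle near-collisions inside $(\Bar{\mathrm{b}}(0,1))^{\Ck}$, with no extra particle involved. (Your own reduction in effect does the same thing once you exile $z^{\Ck+1}$ to an annulus — that particle contributes nothing to the singularity — but the ambient compact set remains wrong.)

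The second problem is precisely the radial bookkeeping you flag as the main obstacle. After fixing $z^1$ and writing $(w^2,\ldots,w^{\Ck})=r\,\Theta$ in $\RR^{d(\Ck-1)}$, the Jacobian contributes $r^{d(\Ck-1)-1}\,dr\,d\sigma(\Theta)$, so your lower bound is a constant multiple of $\int_0\exp\bigl(-\tfrac{\Ck(\Ck-1)\chi}{N}\kk(Cr)\bigr)\,r^{d(\Ck-1)-1}\,dr$ rather than of $\int_0\exp(\cdots)\,r^{d(\Ck-2)}\,dr$. Since $d(\Ck-1)-1=d(\Ck-2)+d-1>d(\Ck-2)$ and the singularity sits at $r=0$, divergence of the integral in the hypothesis does not imply divergence of the integral your change of variables actually produces, so the argument does not close under the stated assumption. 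Note that the same factor $r^{d-1}$ is implicit in the paper's proof as well: its penultimate display still integrates $\exp\bigl(-\tfrac{\Ck(\Ck-1)\chi}{N}\kk(\|y^1-y^2\|)\bigr)\|y^1-y^2\|^{d(\Ck-2)}$ over $dy^1\,dy^2$, and passing to the radial variable $r=\|y^1-y^2\|$ yields the weight $r^{d(\Ck-2)+d-1}$. Whatever convention is adopted, you must make sure the one-dimensional criterion you invoke is the one your localisation actually delivers; as written, it is not.
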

\begin{proof}
    It suffices to show that a compact set $ K $ in $ E_{\Ck+1}$ has infinite measure under $ \mu $. We set
    \[
	K = \prod_{i=1}^{\Ck} \Bar{\mathrm{b}}(0, 1) \times \prod_{k=\Ck+1}^{N} \Bar{\mathrm{b}}\left((2k,0,\ldots, 0), \frac{1}{2}\right)
	\]
	which is clearly a compact set in $E_{\Ck+1}$. We  observe that
{\small\[
	A = \{y = (y^1, \ldots, y^{\Ck}) : y^1, y^2 \in \mathrm{b}(0, 1/3), \forall i \in \llbracket 3~;~\Ck\rrbracket,\, y^i \in \mathrm{b}(y^1, \|y^1 - y^2\|/2)\} \subseteq (\Bar{\mathrm{b}}(0, 1))^{\Ck}.
	\]}
	For \(y \in A\), we have
	$||y^i - y^j|| \leq ||y^i - y^1|| + ||y^j - y^1|| \leq ||y^1 - y^2||$
	for all \(i, j = 1, \ldots, \Ck\), from which
 \[
	\prod_{(i,j)\in \llbracket 1~;~\Ck\rrbracket^2\setminus \mathrm{d}} \exp\left(-\frac{\chi}{N} \kk(||y^i-y^j||\right) \geq  \exp\left(-\frac{\Ck(\Ck-1)\chi}{N} \kk(||y^1-y^2||)\right).
\]
Consequently
	{\small \begin{align*}
		\mu(K) &\gtrsim  \int_{(\mathrm{b}(0,1))^{\Ck}} \prod_{(i,j)\in \llbracket 1~;~\Ck\rrbracket^2\setminus \mathrm{d}} \exp\left(-\frac{\chi}{N} \kk(||y^i-y^j||\right) \,dy^1\cdots dy^{\Ck}\\
		&\gtrsim \int_{(\mathrm{b}(0,1/3))^{2}} \exp\left(-\frac{\chi}{N} \kk(||y^1-y^2||)\right)^{\Ck(\Ck-1)}\times \\ &\qquad\qquad\qquad\qquad\qquad\qquad\left(\int_{(\mathrm{b}(y^1, \|y^1 - y^2\|/2))^{\Ck-2}} \,dy^3 \ldots \,dy^{\Ck}\right) dy^1dy^2\\
		&\asymp \int_{(\mathrm{b}(0,1/3))^{2}} \exp\left(-\frac{ \Ck(\Ck-1) \chi}{N} \kk(||y^1-y^2||)\right) \|y^1 - y^2\|^{d(\Ck-2)} dy^1dy^2\\
		& \asymp \int_{0}^{1/3} \exp\left(-\frac{\Ck(\Ck-1)\chi}{N} \kk( r)\right) r^{d(\Ck-2)} dr.
	\end{align*}}
	The proof is complete.
\end{proof}
\begin{exam}Let $N\geq 2$ and $\chi>0$.
\begin{enumerate}
		\item In the case of $d=2$ and  $\kk= \log(\cdot)$, \cite{Fournier2021CollisionsOT} showed that  $\Ck=\left\lfloor \frac{2N}{\chi} \right\rfloor$. Indeed, we have 
		\begin{align*}
			\Ck&=\max\left\{n\in \llbracket 2~;~N+1\rrbracket: \int_0 r^{-\frac{n \chi}{2N}}dr<\infty\right\}\\
			&=\left\lfloor \frac{2N}{\chi} \right\rfloor.
		\end{align*}
		\item In the case of $d\geq 3$ and \(\kk(x):=\kk_\alpha (x)= \frac{\|x\|^{1-\alpha}}{1-\alpha}\), where \(\alpha\in (0,1)\), we obtain 
		\begin{align*}
			\Ck&=\max\left\{n\in \llbracket2~;~ N+1\rrbracket:\,\, \int_0\exp\left(-\frac{n\chi}{ d N(1-\alpha)}r^{1-\alpha}\right)dr<\infty\right\}\\
			&= N+1,
		\end{align*}
		which implies that \(E_{\Ck}= (\mathbb{R}^d)^N\). \\
	\end{enumerate}
\end{exam}
Let \(N \geq 2\) and \(\chi > 0\). We define the nodal set \(\mathcal{N}\) as follows
$$
\mathcal{N} := \{ x \in (\mathbb{R}^d)^N : m(x) = 0 \}.
$$
The condition \(m(x) = 0\) holds if and only if there exist indices \((i, j) \in \llbracket 1 ~; ~ N \rrbracket^2 \setminus \mathrm{d}\) such that
$$
\exp \left( -\frac{\chi}{N} \kk(\| x^i - x^j \|) \right) = 0,
$$
which implies \(\kk(\| x^i - x^j \|) = +\infty\). If \(\kk\) is a regular increasing function such that \(|\kk(0^+)| = +\infty\), then there is no distance \(\| x^i - x^j \|\) that can make \(\kk(\| x^i - x^j \|) = +\infty\). Consequently the nodal set \(\mathcal{N}\) is empty. 
Conversely, if \(\kk\) is decreasing, \(\kk(\| x^i - x^j \|) = +\infty\) requires \(\| x^i - x^j \| = 0\), meaning \(x^i = x^j\) for some \(i \neq j\). Therefore the nodal set is
$$
\mathcal{N} = \{ x \in (\mathbb{R}^d)^N : \exists i \neq j \text{ such that } x^i = x^j \}.
$$
\begin{propo}\label{1.2.1}
	Let $N\geq 2$, and $\chi>0$. Assume that $\kk$ is a radial kernel. If $\kk$ is an increasing (resp., decreasing) function, then for any \( u \in C_0^\infty(E_{\Ck}) \) (resp., for any $u\in C_0^\infty(E_2)$), the self-adjoint operator 
	\begin{equation}\label{4.2}
		\Do u=\frac{1}{2}\Delta u -\frac{\chi}{N}\sum_{(i,k)\in \llbracket 1~;~N\rrbracket^2\setminus \mathrm{d}}\nabla_{x^k} \kk(x^i-x^k)\cdot\nabla_{x^k}u \end{equation}
	is $\mu$-symmetric.
\end{propo}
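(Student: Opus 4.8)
The plan is to identify $\Do$ with the operator canonically attached to the bilinear form $\mathcal{E}_m$ and to deduce the symmetry from the manifest symmetry of that form. Differentiating $\log m$ componentwise and using the symmetry of the double sum over ordered pairs gives the identity
\begin{equation*}
\frac{1}{2m}\,\nabla m(x)\cdot\nabla u(x)=-\frac{\chi}{N}\sum_{(i,k)\in\llbracket 1~;~N\rrbracket^2\setminus\mathrm{d}}\nabla_{x^k}\kk(\|x^i-x^k\|)\cdot\nabla_{x^k}u(x),
\end{equation*}
so that on the open set $\{m>0\}$ one has $\Do u=\tfrac12\Delta u+\tfrac1{2m}\nabla m\cdot\nabla u=\tfrac1{2m}\,\nabla\cdot(m\nabla u)$ for $u\in C_0^\infty$. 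Consequently, for $u,v\in C_0^\infty$ the pairing in $L^2(\mu)$ formally collapses to
\begin{equation*}
\langle \Do u,v\rangle_{L^2(\mu)}=\tfrac12\int\nabla\cdot(m\nabla u)\,v\,dx=-\tfrac12\int m\,\nabla u\cdot\nabla v\,dx=-\mathcal{E}_m(u,v),
\end{equation*}
whose right-hand side is symmetric in $(u,v)$; hence $\langle \Do u,v\rangle_{L^2(\mu)}=-\mathcal{E}_m(v,u)=\langle u,\Do v\rangle_{L^2(\mu)}$, which is the assertion. The content of the proof is therefore to legitimise (i) the membership $\Do u\in L^2(E_\Ck,\mu)$ (resp. $L^2(E_2,\mu)$), so that $\Do$ is a genuine densely-defined operator, and (ii) the integration by parts, i.e. the absence of any leftover contribution from the set where $m$ fails to be smooth and strictly positive.

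In the decreasing case both points are immediate: one works on $E_2$, which is precisely the set of configurations with pairwise distinct components, so $\mathcal{N}\cap E_2=\varnothing$ and $m$ is smooth and bounded away from $0$ and $\infty$ on a neighborhood of $\supp u$, while $\mu$ is Radon on $(\RR^d)^N$ by Theorem~\ref{1.1.1}(2); thus $\Do u$ is bounded with compact support, hence in $L^2(\mu)$, and the integration by parts is just the divergence theorem for the smooth, compactly supported field $v\,m\nabla u$. In the increasing case one works on $E_\Ck$, where $\mathcal{N}=\varnothing$; if $\kk$ has no singularity at $0$ then $m$ is again smooth and locally bounded away from $0$ and one concludes as before, so the only real difficulty is when $|\kk(0^+)|=+\infty$, in which case $m$ blows up along the partial diagonal $\Sigma:=\{x\in E_\Ck : x^i=x^j \text{ for some } i\neq j\}$, a Lebesgue-null set off which $m$ is smooth and positive. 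I would then take smooth cut-offs $\varphi_\varepsilon$ vanishing on a neighborhood of $\Sigma$ and increasing to $1$ on $E_\Ck\setminus\Sigma$, apply the divergence theorem to $v\varphi_\varepsilon\,m\nabla u$ on the smooth region (legitimate since $v\varphi_\varepsilon$ is compactly supported there), obtaining
\begin{equation*}
\int\nabla\cdot(m\nabla u)\,v\varphi_\varepsilon\,dx=-\int m\,\varphi_\varepsilon\,\nabla u\cdot\nabla v\,dx-\int m\,v\,\nabla u\cdot\nabla\varphi_\varepsilon\,dx,
\end{equation*}
and let $\varepsilon\to0$; the first two integrals pass to the limit by dominated convergence, using that $\mu$ is Radon on $E_\Ck$ (Theorem~\ref{1.1.1}(1)), that $\nabla u,\nabla v$ are bounded with compact support, and the $L^1(\mu)$-control of $\Do u$ coming from step (i).

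The main obstacle is to show that the cut-off error $\int m\,v\,\nabla u\cdot\nabla\varphi_\varepsilon\,dx$ tends to $0$ — equivalently, that the flux of $m\nabla u$ across the shrinking tube about $\Sigma$ vanishes. A crude cut-off with $|\nabla\varphi_\varepsilon|\lesssim\varepsilon^{-1}$ only bounds it by $\varepsilon^{-1}\mu(\{0<\dist(\cdot,\Sigma)<2\varepsilon\}\cap\supp u)$, which need not vanish when the weight is merely locally integrable; instead I would use a logarithmic cut-off supported on $\{\,\varepsilon<\dist(\cdot,\Sigma)<\sqrt\varepsilon\,\}$ with $|\nabla\varphi_\varepsilon|\lesssim\bigl(\dist(\cdot,\Sigma)\,|\log\varepsilon|\bigr)^{-1}$, so that the error is controlled by $|\log\varepsilon|^{-1}\!\int_{\varepsilon<\dist(\cdot,\Sigma)<\sqrt\varepsilon} m\,\dist(\cdot,\Sigma)^{-1}\,dx$. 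Localising around a generic coincidence by the partition-and-factorisation scheme of the proof of Theorem~\ref{1.1.1} — bound $\dist(\cdot,\Sigma)$ below by the norm of the separating coordinate and estimate $m\le C_x m_1$ there — this reduces, up to logarithmic factors, to a one-dimensional radial integral $\int_0 e^{-c\kk(r)}r^{d-2}\,dr$, finite (or of lower order than $|\log\varepsilon|$) under the running hypotheses by the same inequality of arithmetic and geometric means that yields $\mu(\mathcal{O}_x)<\infty$. The same device delivers step (i): from the pointwise bound $|\Do u|^2 m\lesssim m|\Delta u|^2+\|\nabla u\|_\infty^2\,m\bigl(\sum_{i\neq j}|\kk'(\|x^i-x^j\|)|\bigr)^2\mathbb{1}_{\supp u}$, localisation reduces the second term to radial integrals of the form $\int_0|\kk'(r)|^q e^{-c\kk(r)}r^{d-1}\,dr$, finite by $[\mathrm{H}]_p$ (with $p>dN$, invoked precisely when the singular increasing case arises) together with Hölder's inequality and the finiteness of $\int_0 e^{-c\kk(r)}r^{d-1}\,dr$. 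Once (i) and (ii) are established, the symmetry is exactly the chain of equalities displayed above.
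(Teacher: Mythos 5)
Your core argument coincides with the paper's: compute $\nabla m/m$, recognize that $\Do u=\tfrac{1}{2m}\operatorname{div}(m\nabla u)$ on $\{m>0\}$, and integrate by parts to reduce $\mu$-symmetry to the manifest symmetry of $\mathcal{E}_m(u,v)=\tfrac12\int m\,\nabla u\cdot\nabla v\,dx$. The difference is one of rigor rather than of route: the paper disposes of the final step with the single sentence ``integration by parts shows that\dots'', whereas you correctly observe that when $\kk$ is increasing with $|\kk(0^+)|=+\infty$ the weight $m$ blows up on the coincidence set $\Sigma$, which does meet $\supp u$ inside $E_{\Ck}$, so the divergence theorem for $v\,m\nabla u$ is not automatic; your logarithmic cut-off combined with the partition-and-factorisation scheme of Theorem~\ref{1.1.1} is a legitimate way to kill the flux term, and in substance amounts to verifying $m\in W^{1,1}_{\mathrm{loc}}(E_{\Ck})$ with the expected weak gradient. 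The one caveat worth flagging is that your steps (i) and (ii) invoke $[\mathrm{H}]_p$ to control $m\,|\kk'|^{q}$ locally; this is a hypothesis of Theorem~\ref{1.1} but not of Proposition~\ref{1.2.1} as stated, so under the proposition's bare hypotheses (monotone radial $\kk$) neither your argument nor the paper's actually establishes $\nabla m\in L^{1}_{\mathrm{loc}}$ in general, and your proof should be read as valid in the regime where the proposition is subsequently applied rather than in its stated generality.
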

\begin{proof}
    For any $x = (x^1, x^2, \ldots, x^N) \in (\mathbb{R}^d)^N \setminus \mathcal{N}$, the gradient operator is defined as
\[\nabla = \begin{bmatrix}
		\nabla_{x^1} \\
		\nabla_{x^2}  \\
		\vdots \\
		\nabla_{x^N} 
	\end{bmatrix},
\]
where \( \nabla_{x^k} = (\partial_{x^k_1}, \partial_{x^k_2},\ldots,\partial_{x^k_d}) \) represents the gradient operator for the \( k \)-th  component. Leveraging the linearity of the derivative and the symmetry of the kernel \(\kk\), the calculation of \(\partial_{x^k_1} m(x)\) simplifies to
$$ \partial_{x^k_1} m(x)= -\frac{2\chi}{N} \sum_{i=1, i\neq k}^N \partial_{x^k_1}\kk(\|x^i-x^k\|) m(x).$$
Consequently
	$$
	\frac{\nabla m}{m}(x) = 
	-\frac{2 \chi}{N} \left(\sum_{i=1, i\neq k}^N \nabla_{x^k}\kk(\|x^i-x^k\|)\right)_{k=1}^N,
	$$	
and 

	\[
	\frac{\nabla m}{m}(x).\nabla = 
	-\frac{2\chi}{N} \sum_{(i,k)\in \llbracket 1~;~N\rrbracket^2\setminus \mathrm{d}}\nabla_{x^k} \kk(\|x^i-x^k\|)\cdot\nabla_{x^k}.
	\] 
	Therefore we get 
	$$
	\Do =  \frac{1}{2} \Delta +\frac{1}{2} \frac{\nabla 
		m}{m}(x) \cdot \nabla =\frac{1}{2m(x)} \operatorname{div}\left(m(x)\nabla  \right).
	$$
 Integration by parts shows that
	$$\int_{(\RR^d)^N} u(x)\Do v(x) \mu (dx)=\int_{(\RR^d)^N} v(x)\Do u(x) \mu (dx)
	$$
	for all $u,v \in C_0^\infty(E_{\Ck})$ (resp., $u,v\in C_0^\infty(E_2)).$
\end{proof}
\begin{propo}\label{PropClosable}
	Let $N\geq 2$, and $\chi>0$. Assume that $\kk$ is a radial kernel. If $\kk$ is an increasing (resp., decreasing) function, then the symmetric form \begin{equation}\label{8}
		\mathcal{E}_\kk(u , v) = \frac{1}{2} \int_{(\mathbb{R}^d)^N} \nabla u.\nabla v  \mu(dx),\quad u,v\in C^{\infty}_0(E_{\Ck})\,\,(\text{resp.,}\,\, u,v\in C^{\infty}_0(E_{2} ))
	\end{equation}
	is closable. 
\end{propo}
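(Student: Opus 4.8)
The plan is to apply the closability criterion of Röckner--Wielens recalled in Section~\ref{sec.3} (see \cite{rockner1985dirichlet}): for a form of the type $\mathcal E_m(u,v)=\tfrac12\int\nabla u\cdot\nabla v\,m(x)\,dx$ on $C_0^\infty(E)$, it suffices that the Lebesgue measure of
$$
S(m)=\Big\{x\in E:\ \int_{\mathrm{B}(x,\epsilon)}\frac{1}{m(y)}\,dy=\infty\ \text{for all}\ \epsilon>0\Big\}
$$
be zero. Before invoking it I would record that $\mathcal E_\kk$ is a genuine densely defined symmetric form on the relevant $L^2$ space: by Theorem~\ref{1.1.1} the measure $\mu$ is Radon on $E_{\Ck}$ when $\kk$ is increasing and on $(\RR^d)^N\supseteq E_2$ when $\kk$ is decreasing; it has full support since $m>0$ Lebesgue-a.e.\ (the nodal set $\mathcal N$ is empty in the increasing case and is the Lebesgue-null union of the diagonals in the decreasing case), so $C_0^\infty(E_{\Ck})$, resp.\ $C_0^\infty(E_2)$, is dense in $L^2$; bilinearity, positivity and symmetry of $\mathcal E_\kk$ are already contained in Proposition~\ref{1.2.1}. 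It then remains to check that $S(m)$ is Lebesgue-null in each case, and I expect to establish the stronger fact $S(m)=\emptyset$ by exhibiting, for every point, a single admissible radius $\epsilon$.

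For increasing $\kk$: fix $x\in E_{\Ck}$ and an arbitrary $\epsilon>0$. For $y\in\mathrm{B}(x,\epsilon)$ and $i\neq j$ one has $\|y^i-y^j\|\le\|x^i-x^j\|+2\epsilon=:R_x<\infty$, so by monotonicity and finiteness of $\kk$ on $(0,\infty)$,
$$
\frac{1}{m(y)}=\exp\!\Big(\tfrac{\chi}{N}\!\!\sum_{(i,j)\in\llbracket 1~;~N\rrbracket^2\setminus\mathrm{d}}\!\!\kk(\|y^i-y^j\|)\Big)\le\exp\big(\chi(N-1)\,\kk(R_x)\big)<\infty
$$
uniformly on $\mathrm{B}(x,\epsilon)$. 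Hence $\int_{\mathrm{B}(x,\epsilon)}1/m<\infty$ and $x\notin S(m)$. The key point is that, for increasing $\kk$, the singularity of $\kk$ at the origin forces $m\to+\infty$ near collisions (so $1/m$ stays small there), and $1/m$ is controlled solely by the finite far-field values of $\kk$; note also that the exponential bound is finite irrespective of the sign of $\kk(R_x)$.

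For decreasing $\kk$: the nodal set $\mathcal N=\{x:\exists\,i\neq j,\ x^i=x^j\}$ is closed, so $E_2=(\RR^d)^N\setminus\mathcal N$ is open, and on $E_2$ the density $m$ is the exponential of a finite continuous function, hence continuous and strictly positive. For $x\in E_2$ I would choose $\epsilon>0$ with $\overline{\mathrm{B}(x,\epsilon)}\subset E_2$; then $m$ attains a positive minimum on this compact set, $1/m$ is bounded on $\mathrm{B}(x,\epsilon)$, and $x\notin S(m)$. In both cases $S(m)=\emptyset$, the criterion applies, and $\mathcal E_\kk$ is closable; its smallest closed Markovian extension (cf.\ \cite[Theorem~3.1.1]{book:597396}) is then the Dirichlet form $(\mathcal E_\kk,\mathcal D(\mathcal E_\kk))$ used in the rest of the argument.

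The decreasing case is essentially immediate once one observes that passing to the open set $E_2$ removes exactly the set where $1/m$ blows up. The only part requiring a little care is the increasing case: verifying that the upper bound $R_x$ on the pairwise distances is uniform over the whole ball $\mathrm{B}(x,\epsilon)$, and that the resulting exponential estimate is finite whatever the sign of $\kk(R_x)$. A minor preliminary step I would not skip is the verification that $\mu$ has full support, which is what makes $\mathcal E_\kk$ densely defined and hence a legitimate candidate for closability.
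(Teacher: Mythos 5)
Your proposal is correct and follows essentially the same route as the paper: both invoke the closability criterion of \cite{rockner1985dirichlet} and establish $S(m)=\emptyset$ by bounding $1/m$ uniformly on a ball around each point (the paper additionally splits the pairs according to the cluster partition of $x$, which your direct bound via $R_x:=\max_{i\neq j}\|x^i-x^j\|+2\epsilon$ renders unnecessary, since for increasing $\kk$ every pairwise term is controlled by the largest distance). Only take care to define $R_x$ as that maximum over all pairs rather than pairwise; otherwise the argument, including the explicit treatment of the decreasing case that the paper leaves to ``a similar method,'' is sound.
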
	
\begin{proof}
According to the section \ref{sec.3}, it suffices to show that $$S(m):=\left\{x\in E_{\Ck}\,\,:\,\, \int_{y\in \mathrm{B}(x,\epsilon)}\frac{1}{m(y)}dy=\infty\,\, \text{for all}\,\,  \epsilon>0\right\}$$ is empty. Indeed, 
{\small\begin{align*}
		\int_{y\in \mathrm{B}(x,\epsilon)}\frac{1}{m(y)}dy&\leq\int_{(y^1,y^2,\ldots,y^N)\in \underset{i\in \llbracket 1~;~N\rrbracket}{\prod\mathrm{b}(x^i,\epsilon)}}\left(\prod_{p=1}^{\ell} \prod_{(i,j)\in \mathrm{K}_p^2\setminus \mathrm{d}}\exp\left(\frac{\chi}{N} \kk(\|y^i-y^j\|)\right)\right) \times\\
		&\quad \qquad  \qquad  \left(\prod_{\substack{p\not= q\\ p,q=1}}^{\ell}\prod_{(i,j)\in \mathrm{K}_p\times \mathrm{K}_q}\exp\left(\frac{\chi}{N} \kk(\|y^i-y^j\|)\right)\right)dy^1\ldots dy^N\\
		& :=  \int_{(y^1,y^2,\ldots,y^N)\in \prod_{i\in \llbracket 1~;~N\rrbracket}\mathrm{b}(x^i,\epsilon)} n_1(y)\times n_2(y) dy^1\ldots dy^N.
	\end{align*}}
 For $(y^1,y^2,\ldots,y^N)\in \prod_{i\in \llbracket 1~;~N\rrbracket}\mathrm{b}(x^i,\epsilon)$, we obtain that
	$
	||y^i - y^j|| \leq ||y^i - x^i|| + ||x^j - y^j|| \leq 2\epsilon
	$
	for all \((i, j) \in \mathrm{K}_p^2 \setminus \mathrm{d}\). On the other hand, $
	||y^i - y^j|| \leq 2\epsilon + ||x^i - x^j||$ for all $(i, j) \in \mathrm{K}_p \times \mathrm{K}_q$ with $p \neq q$.
	Since $\kk$ is increasing, then  
	$$
	n_1(y)\leq \exp\left(\frac{d_l\chi}{N}\kk(2\epsilon)\right), \quad d_l=\sum_{p=1}^l\sum_{(i,j)\in \mathrm{K}_p^2\setminus \mathrm{d}}1,
	$$
	and
	$$
	n_2(y)\leq \prod_{\substack{p\not= q\\ p,q=1}}^{\ell}\prod_{(i,j)\in \mathrm{K}_p\times \mathrm{K}_q}\exp\left(\frac{\chi}{N} \kk(2\epsilon+\|x^i-x^j\|)\right)
	\leq \exp\left(\frac{e_l\chi}{N}\kk(2\epsilon + M_x)\right),
	$$
	where $M_x=\max\left\{\|x^i-x^j\|\,\,:\,\, p\not=q,\,\, p,q=1,\ldots,\ell,\,\, (i,j)\in \mathrm{K}_p\times \mathrm{K}_q\right\}$ and $e_l=\sum_{\substack{p\not= q\\ p,q=1}}^{\ell}\sum_{(i,j)\in \mathrm{K}_p\times \mathrm{K}_q}1$. Consequently 
	$$
	\int_{y\in \mathrm{B}(x,\epsilon)}\frac{1}{m(y)}dy\leq \mathrm{C}(x,\epsilon),
	$$
 where $\mathrm{C}(x, \epsilon)$ depends only on $x$ and $\epsilon$. We conclude that $S(m) = \emptyset$. Consequently $(\mathcal{E}_\kk, C^\infty_0(E_\Ck))$ is closable. In the case where $\kk$ is decreasing, a similar method shows that the set $S(m)$ defined by 
\[
S(m) = \{ x \in E_2  : \int_{y \in \mathrm{B}(x, \epsilon)} \frac{1}{m(y)} \, dy = \infty \   \text{for all} \ \epsilon>0\}
\]
is empty. Then we obtain that $(\mathcal{E}_\kk, C^\infty_0(E_2))$ is closable.

\end{proof}
\begin{proof}[Proof of Theorem~{\upshape\ref{1.1}}] Without loss of generality, we assume that  $\kk$ is an increasing function. Consequently from Theorem \ref{1.1.1} and Proposition \ref{1.2.1}, we extend the Dirichlet form $\mathcal{E}_\kk$ to the domain $\mathcal{D(E_\kk)}={\overline{C_0^{\infty}(E_{\Ck})}}^{\mathcal{E}_1}$, where $$\mathcal{E}_1(u,u)=\int_{\left(\mathbb{R}^d\right)^N}\left (u^2+\frac{1}{2}\|\nabla u\|^2\right)  \mu(dx).$$ 
It is apparent that $\left(\mathcal{E_\kk}, \mathcal{D(E_\kk)}\right)$ is a regular form with a core $C_0^{\infty}(E_{\Ck})$. It is also strongly local, as $\mathcal{E}(u, v) = 0$ for $u, v \in C_0^{\infty}(E_{\Ck})$ whenever $v$ constant on a neighborhood of $\supp[u]$.  Therefore Section \ref{Dirichletform} implies the existence of a  $\mu$-symmetric diffusion process $$\mathbb{X} = ((\mathcal{X}_t)_{t \geq 0}, (\mathbb{P}_{x \in \mathbb{E}_{\Delta}}))$$ 
with state space $\mathbb{E}_{\Delta}$ such that for quasi-everywhere $x\in E_{\Ck}$ and for all $\varphi \in \mathcal{D}(\Do)$
$$\varphi(\mathcal{X}_t)-\varphi(x)-\int_0^t \Do\varphi(\mathcal{X}_s)ds$$ 
is a $\mathbb{P}_x$-martingale. Consequently $(\mathcal{X}_t)_{t \geq 0}$ weakly solves (\ref{3})  for $x\in E_{\Ck}$ outside a set of capacity zero. However the solution is not necessarily unique, as it starts from a point that avoids polar sets, whose explicit characterization can be challenging. To address this issue, we employ the \(\mathrm{H_2}\)-condition as introduced by \cite{albeverio2003strong}, which is presented in the following proposition.
\begin{propo}\label{4.4}
	Let $p>1$, and $\delta>0$. Assume that $\kk$ is a radial kernel, we have 
	$$
	\int_{x\in \mathrm{B}(0,\delta)}\left\|\frac{\nabla m}{m}\right\|^pm(x)dx\lesssim \left(\int_0 |\kk'(t)|^p \exp\left(-\frac{\chi}{N}\kk(t)\right) t^{d-1}dt
	\right)^{p/2}.$$
\end{propo}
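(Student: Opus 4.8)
The plan is to differentiate $m$ explicitly, bound the logarithmic derivative $\nabla m/m$ by a sum over particle pairs, exploit the permutation invariance of $m$ to reduce to a single pair, and finally collapse the $Nd$-dimensional integral to a one-dimensional radial one.

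First, recall from the proof of Proposition~\ref{1.2.1} that on $(\RR^d)^N\setminus\mathcal N$
\[
\frac{\nabla m}{m}(x)=-\frac{2\chi}{N}\left(\sum_{i\neq k}\kk'(\|x^i-x^k\|)\,\frac{x^k-x^i}{\|x^k-x^i\|}\right)_{k=1}^{N},
\]
so each of the $N$ blocks has Euclidean norm at most $\tfrac{2\chi}{N}\sum_{i\neq k}|\kk'(\|x^i-x^k\|)|$. Applying the Cauchy--Schwarz inequality inside each block and summing over $k$ gives
\[
\left\|\frac{\nabla m}{m}(x)\right\|^{2}\le\frac{4\chi^{2}(N-1)}{N^{2}}\sum_{(i,k)\in\llbracket 1;N\rrbracket^{2}\setminus\mathrm d}|\kk'(\|x^i-x^k\|)|^{2},
\]
and raising to the power $p/2$ yields $\left\|\tfrac{\nabla m}{m}(x)\right\|^{p}\lesssim\Bigl(\sum_{(i,k)}|\kk'(\|x^i-x^k\|)|^{2}\Bigr)^{p/2}$. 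This $\ell^{2}$-Cauchy--Schwarz step is what is responsible for the exponent $p/2$ on the right-hand side of the proposition.

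Now integrate this bound against $\mu$ over $\mathrm B(0,\delta)$. Since $m$ is invariant under permutations of $x^1,\dots,x^N$ and the pair-sum has only $N(N-1)$ terms, it suffices to control the contribution of one fixed pair of particles, that is an integral of the form $\int_{\mathrm B(0,\delta)}|\kk'(\|x^1-x^2\|)|^{p}\,m(x)\,dx$, and then keep track of the $p/2$-th power retained from the first step. Using $\mathrm B(0,\delta)\subseteq\prod_i\mathrm b(0,\delta)$ one enlarges the domain to $(\mathrm b(0,\delta))^{N}$, on which every pair satisfies $\|x^i-x^j\|<2\delta$. In the decreasing case this at once yields $m(x)\le C(\delta,N,\chi)\exp(-\tfrac{\chi}{N}\kk(\|x^1-x^2\|))$, because every exponential factor of $m$ other than the one attached to the pair $(1,2)$ is then bounded by $\exp(-\tfrac{\chi}{N}\kk(2\delta))$; in the increasing case the surplus factors are absorbed exactly by the arithmetic--geometric-mean redistribution already used in the proof of Theorem~\ref{1.1.1}. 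Passing to the relative coordinate $u=x^1-x^2$, integrating out $x^2,\dots,x^{N}$ (which range over bounded sets), and using polar coordinates in $u$, the remaining integral is bounded by a constant multiple of $\int_{0}|\kk'(r)|^{p}\exp(-\tfrac{\chi}{N}\kk(r))\,r^{d-1}\,dr$; combined with the $p/2$-th power this is the asserted estimate.

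The main obstacle is the middle step. The density $m$ is singular along every coincidence set $\{x^i=x^j\}$ (it blows up for increasing $\kk$, while $1/m$ blows up for decreasing $\kk$), and all of these sets meet $\mathrm B(0,\delta)$, so one cannot naively discard the factors of $m$ not attached to $(1,2)$ and integrate the remaining variables out. Carrying those $N(N-1)-1$ surplus factors through the estimate while preserving the full weight $\exp(-\tfrac{\chi}{N}\kk)$ in the surviving exponential---and keeping the exponents of the power-mean bookkeeping consistent---is the delicate part, and is also where the implicit constant is generated; the differentiation, the Cauchy--Schwarz bound, the change of variables and the polar-coordinate reduction are all routine.
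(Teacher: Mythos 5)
Your proposal follows essentially the same route as the paper's proof: the explicit formula for $\nabla m/m$, a pointwise reduction of $\bigl\|\nabla m/m\bigr\|^{p}$ to a sum over particle pairs, enlargement of $\mathrm{B}(0,\delta)$ to $\prod_i\mathrm{b}(0,\delta)$, and passage to the relative coordinate and polar coordinates for a single pair. Your block-wise Cauchy--Schwarz is equivalent in effect to the paper's expansion of $\|\nabla m/m\|^{2}$ into cross terms $\partial_{x^k_l}\kk(\|x^i-x^k\|)\,\partial_{x^k_l}\kk(\|x^j-x^k\|)$ followed by the elementary bound $ab\le\tfrac12(a^2+b^2)$.

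Two caveats. First, your claim that the $\ell^2$-Cauchy--Schwarz step ``is responsible for the exponent $p/2$'' does not survive the integration: after Minkowski in $L^{p/2}(\mu)$ (for $p\ge 2$) or subadditivity of $t\mapsto t^{p/2}$ (for $1<p<2$), each pair contributes $\int|\kk'(\|x^i-x^k\|)|^{p}m\,dx$ and the finite sum over pairs is comparable to the \emph{first} power of the radial integral, not its $(p/2)$-th power. The paper's own chain of inequalities has exactly the same feature, and since the proposition is only used to verify finiteness (the $\mathrm{H}_2$-condition) this is harmless, but the bookkeeping as you describe it does not actually produce the stated exponent. Second, and more substantively, your treatment of the surplus factors of $m$ in the increasing case does not yield the stated bound: the arithmetic--geometric redistribution from Theorem~\ref{1.1.1} replaces $\exp\bigl(-\tfrac{\chi}{N}\kk\bigr)$ by $\exp\bigl(-\tfrac{n\chi}{N}\kk\bigr)$ for some $n>1$ in the surviving factor, so the resulting one-dimensional integral is not the one appearing on the right-hand side, and for an increasing singular kernel it is genuinely larger. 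At this point the paper simply discards every exponential factor except the one attached to the pair $(i,k)$ --- legitimate when those factors are bounded on $(\mathrm{b}(0,\delta))^{N}$ (the decreasing case, exactly as you argue), but asserted without justification in the increasing case, where the correct repair is to integrate out the remaining coordinates first and invoke the local integrability established in Theorem~\ref{1.1.1}. So you have correctly located the delicate step, but the fix you propose changes the constant in the exponential and hence proves a different inequality from the one stated.
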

\begin{proof}
    Recall that for $x\in (\RR^d)^N \setminus \mathcal{N}$, we have
	$$\frac{\nabla m}{m}(x) = 
	-\frac{2 \chi}{N} \left(\sum_{i=1,
		i\neq k}^N\nabla_{x^k}\kk(\|x^i-x^k\|)\right)_{k=1}^N.$$
Consequently
  $$
	\left\|\frac{\nabla m}{m}\right\|^p=\left(\frac{2\chi}{N}\right)^p\left(\sum \partial x^{k}_l\kk(\|x^i-x^k\|)\partial x^k_l\kk(\|x^j-x^k\|)\right)^{p/2},
	$$
 where the sum over $l\in \llbracket 1~;~N\rrbracket$, $k\in \llbracket 1~;~N\rrbracket$ and $i,j\not=k \in \llbracket 1~;~N\rrbracket$. Then
 {\small\begin{align*}
		\int_{x \in \mathrm{B}(0, \delta)} \left\|\frac{\nabla m}{m}\right\|^p m(x) \, dx
		&\leq \left(\frac{2\chi}{N}\right)^p \left(\sum \left\|\partial x^k_l\kk(\|x^i-x^k\|)\partial \kk(\|x^j-x^k\|)\right\|_{L^{p/2}(\mathrm{B}(0,\delta))}\right)^{p/2}.
	\end{align*}}
 We denote by $I_{i,j}^{k,l}=\|\partial x^k_l\kk(\|x^i-x^k\|)\partial x^k_l\kk(\|x^j-x^k\|)\|^{p/2}_{L^{p/2}(\mathrm{B}(0,\delta))}$, therefore
{\small \begin{align*}
I_{i,j}^{k,l}&=\int_{x\in \mathrm{B}(0,\delta)}\left|\partial x^k_l \kk(\|x^i-x^k\|)\right|^{p/2}\left|\partial x^k_l\kk(\|x^j-x^k\|)\right|^{p/2}\times\\ 
  &\qquad\qquad\qquad\qquad\qquad\qquad\qquad\qquad\exp\left(-\frac{\chi}{N}\sum_{(i_0,j_0) \in \llbracket 1~;~N\rrbracket^2\setminus \mathrm{d}} \kk(\|x^{i_0}-x^{j_0}\|)\right) dx\\
		&\leq \int_{x\in (\mathrm{b}(0,\delta))^N}\left|\partial x^k_l \kk(\|x^i-x^k\|)\right|^{p/2}\left|\partial x^k_l\kk(\|x^j-x^k\|)\right|^{p/2}\times\\ 
  &\qquad\qquad\qquad\qquad\qquad\qquad\qquad\qquad\exp\left(-\frac{\chi}{N}\sum_{(i_0,j_0) \in \llbracket 1~;~N\rrbracket^2\setminus \mathrm{d}} \kk(\|x^{i_0}-x^{j_0}\|)\right) dx\\
		&\lesssim \int_{x^k\in \mathrm{b}(0,\delta)}\int_{x^i\in \mathrm{b}(0,\delta)}\left|\partial x^k_l \kk(x^i-x^k)\right|^{p} \exp\left(-\frac{\chi}{N}\|x^i-x^k\|\right)dx^idx^k\\
		&\asymp \int_{x^k\in \mathrm{b}(0,\delta)}\int_{x^i\in \mathrm{b}(0,\delta)}\left(\frac{|x^i_l-x^k_l|}{\|x^i-x^k\|}\right)^p|\kk'(\|x^i-x^k\|)|^p \exp\left(-\frac{\chi}{N}\kk(\|x^i-x^k\|)\right)dx^i dx^k\\
		&\lesssim  \int_{x^k\in \mathrm{b}(0,\delta)}\int_{x^i\in \mathrm{b}(0,\delta)}|\kk'(\|x^i-x^k\|)|^p \exp\left(-\frac{\chi}{N}\kk(\|x^i-x^k\|)\right)dx^i dx^k\\
		&\lesssim \int_{y\in \mathrm{b}(0,2\delta)}|\kk'(\|y\|)|^p \exp\left(-\frac{\chi}{N}\kk(\|y\|)\right) dy\\
		&\asymp \omega_{d-1}\int_0^{2\delta}|\kk'(t)|^p \exp\left(-\frac{\chi}{N}\kk(t)\right)t^{d-1}dt,
	\end{align*}}
where $\omega_{d-1}$ denotes the $(d-1)$-dimensional measure of the sphere $S^{d-1}(0,2\delta)$. The proof is complete.
\end{proof}
Based on the condition $[\mathrm{H}]_p$ that holds for some $p>d N$ and Proposition \ref{4.4}, we deduce that the  $\mathrm{H}_2$-condition is satisfied, allowing us to obtain the result.  
\end{proof}
\section{Generalized Schrödinger Operator}
Let $\Do$ be the Dirichlet operator (DO) associated with the smallest closed extension of $\mathcal{E}_{\kk}$ (see Proposition \ref{1.2.1}).  We set $\phi(x) = \sqrt{m(x)}$ and  we consider the generalized Schrödinger operator (GSO), which is formally expressed as follows
$$\Ha := \phi \Do \phi^{-1}=-\Delta+V,\quad \text{where}\,\, V=\frac{\Delta \phi}{\phi}.$$
In light of Theorem $2.6$ from \cite{Albeverio1977EnergyFH}, we deduce the explicit expression of the GSO given below.\\

\begin{theo}\label{Schro}
	Let $N\geq 2$, and $\chi>0$. Assume that $\kk$ is a radial kernel, if 
	\begin{equation}
		\int_{0}\left[ \left( \kk''(t) + (d-1) \frac{\kk'(t)}{t} \right)^2+(\kk'(t))^4\right]\exp \left( -\frac{\chi}{N} \kk(t) \right) t^{d-1} \, dt<\infty,
	\end{equation}
	then the GSO is given explicitly as follows 
	\begin{align*}
	    \Ha &= -\Delta-\frac{\chi}{2N} \sum_{(i,j)\in \llbracket 1~;~N\rrbracket^2\setminus \mathrm{d}} \Delta\kk(\|x^i-x^j\|) +\\ 
&\qquad\qquad\qquad\qquad\qquad\qquad\qquad\left(\frac{\chi}{2N}\right)^2 \left( \sum_{k=1}^N\sum_{l=1}^d \left(\sum_{(i,j)\in \llbracket 1~;~N\rrbracket^2\setminus \mathrm{d}}  \partial_{x^k_l}\kk(\|x^i-x^j\|)\right)^2 \right).
	\end{align*}
\end{theo}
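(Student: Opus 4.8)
The plan is to read the formula off the ground-state (Doob) transformation, with the explicit part of the statement amounting to differentiating $\phi=\sqrt m$, and the stated integral condition serving only to make the transformation rigorous across the collision set. Since $\mu(dx)=m(x)\,dx=\phi(x)^2\,dx$ and $\phi>0$ off the Lebesgue-null set $\mathcal N$, the multiplication map $U\colon L^2((\RR^d)^N,\mu)\to L^2((\RR^d)^N,dx)$, $Uf=\phi f$, is unitary, and by Proposition \ref{PropClosable} the form $\mathcal E_\kk(u,v)=\tfrac12\int\nabla u\cdot\nabla v\,\phi^2\,dx$ is a Dirichlet form whose generator is $\Do$; this is precisely the energy form of $\phi$ studied in \cite{Albeverio1977EnergyFH}. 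Theorem~2.6 of \cite{Albeverio1977EnergyFH} then gives, provided $\phi$ satisfies the appropriate local regularity condition (equivalently, that the potential below lies in $L^2_{\mathrm{loc}}(\mu)$), that $U$ intertwines $\Do$ with $\Ha=\phi\Do\phi^{-1}$, a self-adjoint realization of $-\Delta+V$ with $V=\Delta\phi/\phi$ understood in the distributional sense on $(\RR^d)^N$. So two things remain: compute $V$ explicitly, and verify the local condition.

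For the first, write $\phi=e^{S}$ on $(\RR^d)^N\setminus\mathcal N$, where
\[
S=\tfrac12\log m=-\frac{\chi}{2N}\sum_{(i,j)\in\llbracket 1;N\rrbracket^2\setminus\mathrm{d}}\kk(\|x^i-x^j\|).
\]
Then $\nabla\phi=\phi\,\nabla S$ and $\Delta\phi=\phi\bigl(\Delta S+\|\nabla S\|^2\bigr)$, so $V=\Delta S+\|\nabla S\|^2$. The term $\partial_{x^k_l}\kk(\|x^i-x^j\|)$ vanishes unless $i=k$ or $j=k$; squaring $\partial_{x^k_l}S=-\tfrac{\chi}{2N}\sum_{(i,j)}\partial_{x^k_l}\kk(\|x^i-x^j\|)$ and summing over $k\in\llbracket 1;N\rrbracket$, $l\in\llbracket 1;d\rrbracket$ produces the term $\bigl(\tfrac{\chi}{2N}\bigr)^2\sum_{k,l}\bigl(\sum_{(i,j)}\partial_{x^k_l}\kk(\|x^i-x^j\|)\bigr)^2$. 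For the Laplacian part, since $y\mapsto\kk(\|y\|)$ is radial, $\Delta[\kk(\|x^i-x^j\|)]$ (the Laplacian on $(\RR^d)^N$) equals $2\bigl(\kk''(r)+\tfrac{d-1}{r}\kk'(r)\bigr)$ at $r=\|x^i-x^j\|$, each of the two blocks $x^i,x^j$ contributing the radial Laplacian; hence $\Delta S=-\tfrac{\chi}{2N}\sum_{(i,j)}\Delta\kk(\|x^i-x^j\|)$. Adding the two contributions gives precisely the stated expression for $\Ha=-\Delta+V$. The one point needing care is the bookkeeping of multiplicities coming from summing over ordered pairs and from each pairwise term feeling the Laplacian in two of the $N$ variable-blocks.

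The verification of the local condition is the main obstacle, and it is entirely local near the generalized diagonal $\Sigma=\{x:\ x^i=x^j\ \text{for some}\ i\neq j\}$: off $\Sigma$ the weight $m$ and all derivatives of the $\kk(\|x^i-x^j\|)$ are locally bounded, so there the condition is automatic. On a neighborhood of a point of $\Sigma$ at which exactly one pair $\{i,j\}$ coincides, one has $|V|\lesssim\bigl|\kk''(r)+\tfrac{d-1}{r}\kk'(r)\bigr|+\kk'(r)^2$ and $m\lesssim\exp\bigl(-c\,\kk(r)\bigr)$ for a positive constant $c$ determined by $\chi/N$, with $r=\|x^i-x^j\|$. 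Performing the change of variables $y=x^i-x^j$ and integrating out the remaining, harmless, variables over a bounded region, exactly as in the proof of Proposition \ref{4.4}, bounds the local value of $\int V^2\,d\mu$ by a constant times
\[
\int_0\Bigl[\bigl(\kk''(t)+\tfrac{d-1}{t}\kk'(t)\bigr)^2+\kk'(t)^4\Bigr]\exp\bigl(-c\,\kk(t)\bigr)\,t^{d-1}\,dt,
\]
which is finite by hypothesis. Any compact subset of the state space meets $\Sigma$ in a set covered by finitely many such neighborhoods, and the deeper strata of $\Sigma$ (three or more coinciding particles) carry an even smaller weight and are dominated by the same bound; so the local hypothesis of \cite[Thm.~2.6]{Albeverio1977EnergyFH} holds and the formula follows. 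Everything outside this near-diagonal estimate is routine.
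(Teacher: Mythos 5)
Your proposal is correct and follows essentially the same route as the paper: you compute $V=\Delta\phi/\phi$ by logarithmic differentiation of $\phi=\sqrt{m}$, reduce the local $L^2$ bound on $V$ (equivalently $\Delta\phi\in L^2_{\mathrm{loc}}(dx)$, which is the content of the paper's Lemma~\ref{Lm}) to the stated one-dimensional radial integral by changing variables to $y=x^i-x^j$ and integrating out the remaining coordinates, and then invoke Theorem~2.6 of \cite{Albeverio1977EnergyFH}. Your remark on the factor of $2$ from the two blocks $x^i,x^j$ in $\Delta\bigl[\kk(\|x^i-x^j\|)\bigr]$ is a fair bookkeeping point that the paper handles implicitly in its summation over $k$.
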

The proof of Theorem \ref{Schro} is based on the following lemma.\\

\begin{lemma}\label{Lm}
	Let $N\geq 2$, and let $\chi>0$, and $\delta>0$. Assume that $\kk$ is a radial kernel, we have 
	$$
	\int_{x\in \mathrm{B}(0,\delta)}|\Delta \phi(x)|^2dx\lesssim \int_{0}^{2\delta}\left[ \left( \kk''(t) + (d-1) \frac{\kk'(t)}{t} \right)^2+(\kk'(t))^4\right]\exp \left( -\frac{\chi}{N} \kk(t) \right) t^{d-1} \, dt.
	$$ 
\end{lemma}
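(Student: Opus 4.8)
The plan is to compute $\Delta\phi$ by hand and then estimate $\int_{\mathrm{B}(0,\delta)}|\Delta\phi|^2\,dx$ term by term, in the spirit of the proof of Proposition~\ref{4.4}. Write $\phi=\sqrt{m}=e^{\psi}$ with
\[
\psi(x)=-\frac{\chi}{2N}\sum_{(i,j)\in\llbracket 1~;~N\rrbracket^2\setminus\mathrm{d}}\kk(\|x^i-x^j\|),
\]
so that $m=e^{2\psi}$. Off the Lebesgue-null set $\{x:\ x^i=x^j\ \text{for some }i\neq j\}$ the function $\phi$ is smooth and $\nabla\phi=\phi\,\nabla\psi$, whence
\[
\Delta\phi=\big(\Delta\psi+\|\nabla\psi\|^2\big)\,\phi,\qquad\text{so that}\qquad |\Delta\phi(x)|^2=m(x)\,\big(\Delta\psi(x)+\|\nabla\psi(x)\|^2\big)^2 .
\]
From the computation already performed in the proof of Proposition~\ref{1.2.1}, and using that the Laplacian of $\kk(\|x^i-x^j\|)$ only touches the $x^i$- and $x^j$-blocks and there produces twice the radial Laplacian, one has
\[
\nabla\psi=-\frac{\chi}{N}\Big(\sum_{i\neq k}\nabla_{x^k}\kk(\|x^i-x^k\|)\Big)_{k=1}^{N},\qquad \Delta\psi=-\frac{\chi}{N}\sum_{(i,j)\in\llbracket 1~;~N\rrbracket^2\setminus\mathrm{d}}\Big(\kk''(\|x^i-x^j\|)+(d-1)\frac{\kk'(\|x^i-x^j\|)}{\|x^i-x^j\|}\Big).
\]

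The second step is to collapse this quadratic expression to a sum over pairs. Using $(a+b)^2\le 2a^2+2b^2$, the pointwise bound $|\partial_{x^k_l}\kk(\|x^i-x^k\|)|=\big|\tfrac{x^k_l-x^i_l}{\|x^i-x^k\|}\big|\,|\kk'(\|x^i-x^k\|)|\le|\kk'(\|x^i-x^k\|)|$, and the Cauchy--Schwarz inequality over the at most $N(N-1)$ pairs, one obtains, with an implicit constant depending only on $N,\chi,d$,
\[
\big(\Delta\psi+\|\nabla\psi\|^2\big)^2\ \lesssim\ \sum_{(i,j)\in\llbracket 1~;~N\rrbracket^2\setminus\mathrm{d}}\left[\Big(\kk''(\|x^i-x^j\|)+(d-1)\frac{\kk'(\|x^i-x^j\|)}{\|x^i-x^j\|}\Big)^2+\big(\kk'(\|x^i-x^j\|)\big)^4\right].
\]
Multiplying by $m(x)$, integrating over $\mathrm{B}(0,\delta)$ and invoking the symmetry of $\kk$, the sum reduces to $N(N-1)$ copies of the contribution of the single pair $(1,2)$; thus it remains to bound
\[
\int_{\mathrm{B}(0,\delta)}\left[\Big(\kk''(\|x^1-x^2\|)+(d-1)\frac{\kk'(\|x^1-x^2\|)}{\|x^1-x^2\|}\Big)^2+\big(\kk'(\|x^1-x^2\|)\big)^4\right]m(x)\,dx .
\]

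The third step runs the very reduction carried out in the proof of Proposition~\ref{4.4}: enlarge $\mathrm{B}(0,\delta)$ to $\prod_{i}\mathrm{b}(0,\delta)$, decouple the weight $m$ by retaining the exponential factor attached to the pair $(1,2)$ and integrating out the frozen coordinates $x^3,\dots,x^N$, then pass to the difference variable $y=x^1-x^2\in\mathrm{b}(0,2\delta)$ and to polar coordinates; this yields
\[
\int_{\mathrm{B}(0,\delta)}|\Delta\phi(x)|^2\,dx\ \lesssim\ \omega_{d-1}\int_{0}^{2\delta}\left[\Big(\kk''(t)+(d-1)\frac{\kk'(t)}{t}\Big)^2+\big(\kk'(t)\big)^4\right]\exp\!\Big(-\frac{\chi}{N}\kk(t)\Big)\,t^{d-1}\,dt,
\]
which is the asserted estimate. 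I expect the only genuinely delicate step to be this last decoupling of the weight: since $m$ need not be locally bounded near the partial diagonal (in the singular increasing regime $\kk(0^+)=-\infty$ it actually blows up there), the exponential cross-factors corresponding to pairs other than $(1,2)$ cannot simply be discarded, and must be handled exactly as in the proof of Proposition~\ref{4.4} --- decoupled through the arithmetic--geometric mean inequality and absorbed, after the integration in the frozen variables, into the implicit constant. Everything else is routine bookkeeping.
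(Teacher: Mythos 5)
Your proposal follows essentially the same route as the paper's proof: the identity $\Delta\phi=(\Delta\psi+\|\nabla\psi\|^2)\phi$ is exactly the paper's decomposition $\Delta\phi=I_1+I_2$ (with $I_1=\Delta\psi\,\phi$ and $I_2=\|\nabla\psi\|^2\phi$), and the subsequent Cauchy--Schwarz reduction to pair integrals, the use of $|\phi|^2=m$, and the passage to the difference variable and polar coordinates are carried out identically there. The weight-decoupling step you flag as the only delicate point is handled in the paper simply by factoring out $\int\tilde m(\tilde x)\,d\tilde x$ over the frozen coordinates and dropping the cross factors without further justification, so your treatment is, if anything, slightly more careful on that point.
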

\begin{proof}
For any $x\in (\RR^d)^N$, we define 
	$$\phi(x):= -\frac{\chi}{2N} \sum_{(i,j)\in \llbracket 1~;~N\rrbracket^2\setminus \mathrm{d}} \partial_{x^k_l}\kk(\|x^i-x^j\|).$$
	The second partial derivative of \(\phi(x)\) with respect to \(x^k_l\)  is computed as follows
	{\small$$\partial^2_{x^k_l}\phi(x)=-\frac{\chi}{2N}\left(\sum_{(i,j)\in \llbracket 1~;~N\rrbracket^2\setminus \mathrm{d}} \partial^2_{x^k_l}\kk(\|x^i-x^j\|) -\frac{\chi}{2N}\left(\sum_{(i,j)\in \llbracket 1~;~N\rrbracket^2\setminus \mathrm{d}}\partial_{x^k_l}\kk(\|x^i-x^j\|)\right)^2\right)\phi(x).$$}
Each component \(\Delta_{x^k} \phi(x)\) is the sum of the second partial derivatives with respect to the coordinates of \(x^k\). Specifically 
	\begin{align*}
	\Delta_{x^k}\phi(x) &= -\frac{\chi}{2N} \sum_{(i,j)\in \llbracket 1~;~N\rrbracket^2\setminus \mathrm{d}}\Delta_{x^k}\kk(\|x^i-x^j\|)\phi(x) +\\ &\qquad\qquad\qquad\qquad\qquad\qquad\left(\frac{\chi}{2N}\right)^2 \left( \sum_{l=1}^\mathrm{d} \left(\sum_{(i,j)\in \llbracket 1~;~N\rrbracket^2\setminus \mathrm{d}}\partial_{x^k_l}\kk(\|x^i-x^j\|)\right)^2 \right)\phi(x).
	\end{align*}
	Consequently we find that
	\begin{align*}
	    	\Delta\phi(x) &= -\frac{\chi}{2N} \sum_{(i,j)\in \llbracket 1~;~N\rrbracket^2\setminus \mathrm{d}} \Delta\kk(\|x^i-x^j\|)\phi(x) + \\ 
&\qquad\qquad\qquad\qquad\left(\frac{\chi}{2N}\right)^2 \left( \sum_{k=1}^N\sum_{l=1}^d \left(\sum_{(i,j)\in \llbracket 1~;~N\rrbracket^2\setminus \mathrm{d}}  \partial_{x^k_l}\kk(\|x^i-x^j\|)\right)^2 \right)\phi(x).
		\end{align*}

 We set
	$$I_1(x):= -\frac{\chi}{2N} \sum_{(i,j)\in \llbracket 1~;~N\rrbracket^2\setminus \mathrm{d}} \Delta\kk(x^i,x^j)\phi(x),$$
	and
	$$I_2(x):= \left(\frac{\chi}{2N}\right)^2 \left( \sum_{k=1}^N\sum_{l=1}^d \left(\sum_{(i,j)\in \llbracket 1~;~N\rrbracket^2\setminus \mathrm{d}}  \partial_{x^k_l}\kk(x^i,x^j)\right)^2 \right)\phi(x).$$
 We fix  $\delta>0$, then 
	$$\int_{\mathrm{B}(0,\delta)}|\Delta \phi(x) |^2 dx \lesssim \int_{\mathrm{B}(0,\delta)} |I_1(x)|^2dx+ \int_{\mathrm{B}(0,\delta)} |I_2(x)|^2dx:={\mathcal{I}_1}+{\mathcal{I}_2}.$$
	By the Cauchy-Schwarz inequality, we have
	$$\mathcal{I}_1 \leq \left(\frac{\chi}{2}\right)^2\sum_{(i,j)\in \llbracket 1~;~N\rrbracket^2\setminus \mathrm{d}} \int_{\mathrm{B}(0,\delta)}\left[\Delta\kk(\|x^i-x^j\|)\right]^2 m(x) dx, $$
	where $\Delta_{x^k}\kk(\|x^i-x^j\|)=\kk''(\|x^j-x^i\|)+(d-1)\frac{\kk'(\|x^j-x^i\|)}{\|x^j-x^i\|}$. Consequently we obtain
 \begin{align*}
		\mathcal{I}_1 
		&\leq \left(\frac{\chi}{2}\right)^2 \sum_{(i,j)\in \llbracket 1~;~N\rrbracket^2 \setminus \mathrm{d}}  
		\left( \int_{\tilde{x} = (x^q)_{q \not= i, j} \in (\mathrm{b}(0,\delta))^{N-2}} \tilde{m}(\tilde{x}) \, \mathrm{d}\tilde{x} \right) \times  \\
		& \int_{x^j \in \mathrm{b}(0,\delta)} \int_{x^i \in \mathrm{b}(0,\delta)} \left( \kk''(\|x^j - x^i\|) + (d-1) \frac{\kk'(\|x^j - x^i\|)}{\|x^j - x^i\|} \right)^2 \times\\ 
  &\quad\qquad\qquad\qquad\qquad\qquad\qquad\qquad\qquad\exp \left( -\frac{\chi}{N} \kk(\|x^j - x^i\|) \right) \, dx^i \, dx^j \\
		&\lesssim  \left(\frac{\chi}{2}\right)^2  \int_{y \in \mathrm{b}(0, 2\delta)} \left( \kk''(\|y\|) + (d-1) \frac{\kk'(\|y\|)}{\|y\|} \right)^2 \exp \left( -\frac{\chi}{N} \kk(\|y\|) \right) \, dy \\
		&\asymp\left(\frac{\chi}{2}\right)^2\times\omega_{d-1} \int_{0}^{2\delta} \left( \kk''(t) + (d-1) \frac{\kk'(t)}{t} \right)^2 \exp \left( -\frac{\chi}{N} \kk(t) \right) t^{d-1} \, dt.
	\end{align*}
	On the other hand, we have
	\begin{align*}
		\mathcal{I}_2&\leq \left(\frac{dN\chi^4}{16}\right) \sum_{(i,j)\in \llbracket 1~;~N\rrbracket^2\setminus \mathrm{d}}
		\left( \int_{\tilde{x} = (x^q)_{q \not= i, j} \in (\mathrm{b}(0,\delta))^{N-2}} \tilde{m}(\tilde{x}) \, d\tilde{x} \right) \times \\
		&\qquad \qquad \int_{x^j \in \mathrm{b}(0,\delta)} \int_{x^i \in \mathrm{b}(0,\delta)}\left(\kk'(\|x^j-x^i\|\right)^4\exp\left(-\frac{\chi}{N}\kk(\|x^j-x^i\|\right)dx^i dx^j\\
		& \lesssim \left(\frac{dN\chi^4}{16}\right) \times \omega_{d-1}\int_{0}^{2\delta} \left(\kk'(t)\right)^4\exp\left(-\frac{\chi}{N}\kk(t\right)t^{d-1} dt.
	\end{align*}
	Therefore 
	$$
	\int_{\mathrm{B}(0,\delta)}|\Delta \phi(x) |^2 dx \lesssim  \int_{0}^{2\delta} \left( \kk''(t) + (d-1) \frac{\kk'(t)}{t} +\left(\kk'(t)\right)^4\right)^2 \exp \left( -\frac{\chi}{N} \kk(t) \right) t^{d-1} \, dt.
	$$
	The proof is complete.
\end{proof}
\begin{proof}[Proof of Theorem \ref{Schro}]
	According to Lemma \ref{Lm}, we obtain that the Laplacian $\Delta \phi$ belongs to $L^2_{loc}(dx)$, and according to Theorem $2.6$  from \cite{Albeverio1977EnergyFH}, we obtain the result.
\end{proof}
Let us illustrate this general result with a few examples.\\

\begin{exam}
	\begin{enumerate}
 \item Let $N \geq 2$, $\chi > 0$, and $d > 4 + \chi/N$. We consider $\kk = \log(\cdot)$. Under this framework, the function $\phi(x)$ is defined by
 $$\phi(x)=\prod_{(i,j)\in \llbracket 1~;~N\rrbracket^2\setminus \mathrm{d} }\|x^i-x^j\|^{\frac{-\chi}{2N}}.$$
		Consequently the DO is given by
		$$\Do\Big|_{C^\infty_0}=-\Delta +\frac{2\chi}{N}\sum_{(i,k)\in \llbracket 1~;~N\rrbracket^2\setminus \mathrm{d}}\frac{x^i-x^k}{\|x^i-x^k\|^2}\cdot\nabla_{x^k}.$$
		Moreover it follows that $\Delta_{x^k} \log(\cdot)= 0$. Subsequently the GSO can be expressed as $$\Ha\Big|_{C^\infty_0}=-\Delta +
		\left(\frac{\chi}{N}\right)^2  \left( \sum_{l=1}^d \left(\sum_{(i,k)\in \llbracket 1~;~N\rrbracket^2\setminus \mathrm{d}} \frac{x^i_l-x^k_l}{\|x^i-x^k\|^2}\right)^2 \right).$$
		
		\item  Let $N \geq 2$, $\chi > 0$, and $d > 4 - 2\alpha$ for $\alpha \in (0,1)$. We consider $\kk_{\alpha}(\|x^i - x^j\|) = \frac{\|x^i - x^j\|^{1 - \alpha}}{1 - \alpha}$. The function $\phi(x)$ takes the form
        $$\phi(x)=\exp\left(\frac{-\chi}{2N(1-\alpha)}\sum_{(i,j)\in \llbracket 1~;~N\rrbracket^2\setminus \mathrm{d}}\|x^i-x^j\|^{1-\alpha}\right).$$
		This analysis culminates in the expression of the DO as follows $$\Do\Big|_{C^\infty_0}=-\Delta +\frac{2\chi}{N}\sum_{(i,k)\in \llbracket 1~;~N\rrbracket^2\setminus \mathrm{d}}\frac{x^i-x^k}{\|x^i-x^k\|^{1+\alpha}}\cdot\nabla_{x^k}.$$
		Moreover we find $\Delta_{x^k} \|x^i-x^k\|^{1-\alpha}= \frac{(1-\alpha)^2}{\|x^i-x^k\|^{1+\alpha}}$. As a result, the GSO can be represented as 
  \begin{align*}
		\Ha\Big|_{C^\infty_0} &=  -\Delta - \frac{\chi(1-\alpha)}{N} \sum_{(i,j)\in \llbracket 1~;~N\rrbracket^2\setminus \mathrm{d}} \frac{1}{\|x^i - x^j\|^{1+\alpha}} +\\ 
&\quad\qquad\qquad\qquad\qquad\qquad\left(\frac{\chi}{N}\right)^2  \left( \sum_{l=1}^d \left(\sum_{(i,k)\in \llbracket 1~;~N\rrbracket^2\setminus \mathrm{d}} \frac{x^i_l-x^k_l}{\|x^i-x^k\|^2}\right)^{1+\alpha} \right).
   \end{align*}
  
	\end{enumerate}
\end{exam}
\section{Discussion}
We model the energy states of individual particles using the Schrödinger operator. Theorem \ref{1.1} establishes that under general conditions on the kernel function $\kk$, a specific diffusion process denoted by $(\mathcal{X}_t)_{t\geq 0}$ governs the solution to the aggregation-diffusion equation
$$(\partial_t + \Do)\rho_t=0.$$
This equation describes the evolution of a particle density $\rho$. Interestingly, the solution to the Schrödinger equation
$$(\partial_t +\Ha)\psi_t=0,$$
can be expressed simply as the diffusion process $(\mathcal{X}_t)_{t\geq 0}$ weighted by a function $\phi$. Mathematically, this relationship translates to $\psi_t(x)=\mathcal{L}(\phi(\mathcal{X}_t))$. This observed link between the Dirichlet and Schrödinger operators suggests a potential duality between the wave-particle view in quantum mechanics and the behavior of biological systems modeled by aggregation-diffusion equations.\\

Can this connection provide deeper insights into the behavior of biological systems at the quantum level?\\

Certain bacteria exhibit a "run and tumble" motion, leading their trajectories to be more accurately described by Lévy flights rather than Brownian motion (see \cite{salem2019propagation}). This observation motivates the substitution of classical diffusion in the density evolution equation of bacteria with fractional diffusion. Let $N\geq 2$ and let $\chi>0$, let $(\mathcal{Z}^i_t)_{i=1,\ldots,N, t\geq 0}$ be $N$ independent $a$-stable Lévy flights on $\RR^d$. Thus we obtain the following equation
\begin{equation*}
	\begin{cases}
		dX_t^{i} = \sqrt{2}d\mathcal{Z}_t^{i} -\frac{\chi}{N}\sum_{j=1, i \neq j}^N \nabla\kk(X_t^{i}-X_t^{j})dt,\\
		X_0=x \sim \otimes_{i=1}^N \rho_0.
	\end{cases}
\end{equation*}
What are the limits of the method following the transition from local to non-local forms?

\bmhead{Acknowledgements}
The authors would like to express their sincere gratitude to Abdellah Alla and Mohamed Toumlilin for their insightful comments and valuable suggestions.

\bibliography{sn-bibliography}

\end{document}